\newcommand{\balpha}{{\boldsymbol{\alpha}}}
\newcommand{\bbeta}{{\boldsymbol{\beta}}}
\newcommand{\blam}{{\boldsymbol{\lambda}}}
\newtheorem{theorem}{Theorem}[section]
\newtheorem{lemma}[theorem]{Lemma}
\newtheorem{corollary}{Corollary}[section]      
\theoremstyle{definition}
\newtheorem{definition}[theorem]{Definition}
\theoremstyle{remark}
\newtheorem{remark}[theorem]{Remark}
\newtheorem{assumption}[theorem]{Assumption}
\numberwithin{equation}{section}
\newcommand{\mc}[1]{{\mathcal{#1}}}
\begin{document}

\title[$H^m$-Nonconforming FEM Spaces for arbitrary $\mathbb{R}^n$]{A
construction of canonical nonconforming finite element spaces for
elliptic equations of any order in any dimension}
\thanks{This study is supported in part by the National Natural
Science Foundation of China grant No. 12222101 and the Beijing Natural
Science Foundation No. 1232007.}



\author[J.~Li]{Jia Li}
\address[J.~Li]{School of Mathematical Sciences, Peking University,
  Beijing, 100871, P.R. China}
\email{lijia2002@stu.pku.edu.cn}

\author[S.~Wu]{Shuonan~Wu}
\address[S.~Wu]{School of Mathematical Sciences, Peking University,
  Beijing, 100871, P.R. China}
\email{snwu@math.pku.edu.cn}

\subjclass[2010] {65N30, 65N12}
\keywords{Finite element method, nonconforming, $2m$-th order elliptic problem}

\date{}


\begin{abstract}
A unified construction of canonical $H^m$-nonconforming finite elements is developed for $n$-dimensional simplices for any $m, n \geq 1$. Consistency with the Morley-Wang-Xu elements [Math. Comp. 82 (2013), pp. 25-43] is maintained when $m \leq n$. In the general case, the degrees of freedom and the shape function space exhibit well-matched multi-layer structures that ensure their alignment. Building on the concept of the nonconforming bubble function, the unisolvence is established using an equivalent integral-type representation of the shape function space and by applying induction on $m$. The corresponding nonconforming finite element method applies to $2m$-th order elliptic problems, with numerical results for $m=3$ and $m=4$ in 2D supporting the theoretical analysis.
\end{abstract}

\maketitle

\section{Introduction} \label{sec:intro}
Numerical methods for solving $2m$-th order elliptic equations in $n$-dimensional domains have long been a fundamental problem in finite element methods.

A straightforward approach is to construct a finite element space with piecewise polynomials that satisfies $H^m$ conformity, namely, $C^{m-1}$ continuity. 
However, exploring this canonical setting for arbitrary $m$ or $n$ is far from trivial. 
For simplicial meshes when $n = 2$, researchers have progressively developed $H^2$-conforming element \cite{ciarlet1978finite} (known as Argyris element), and $H^m$-conforming elements for arbitrary $m$ \cite{vzenivsek1970interpolation,bramble1970triangular}.
However, as the dimensionality increases, the geometric complexity of simplex elements escalates rapidly, making the problem extremely challenging. 
Progress has been made only in specific cases over the years, such as for $m=2, n=3$ \cite{zhang2009family, walkington2014c}, for $m=2, n=4$ \cite{zhang2016family}, and arbitrary $m$ for $n=3$ or $n=4$ \cite{zhang2022nodal}. 
It was not until recently that Hu, Lin, and Wu \cite{hu2023construction} provided a construction of conforming elements for $m,n\geq1$, addressing this open problem in the canonical setting. Notably, in their construction, the polynomial degree starts from $(m-1)2^n+1$. 
Subsequent work \cite{hu2024condition} confirmed that this is indeed the minimal possible polynomial degree. 
A geometric interpretation of this fundamental result is provided in \cite{chen2021geometric}. An alternative approach is constructing conforming finite elements on rectangular meshes for arbitrary $m$ and $n$ (see \cite{hu2015minimal}).

Within this canonical setting of piecewise polynomials, another research direction is the nonconforming finite element method. 
Although the corresponding finite element space does not form a subspace of $H^m$, it is designed with a weak continuity that, when combined with a simple broken bilinear form, proves effective. 
One direct advantage over conforming finite element method is the ability to use {\it lower-degree} polynomial spaces locally, which offers computational benefits.
For instance, the classical two-dimensional Morley element uses only piecewise quadratic polynomials to solve fourth-order ($m=2$) elliptic problems, making it minimal in terms of polynomial approximation.
For $n=2$, \cite{hu2016canonical} introduced a family of nonconforming
finite elements for $m \geq 3$ with polynomial degrees of $\max\{2m-3,
4\}$, exhibiting first-order convergence in the broken $H^m$ norm.
In the case of $n=3$, \cite{hu2020family} constructed nonconforming elements of convergence order $k-1$ for $H^2$ problems by enriching $\mathcal{P}_k$ with face bubble functions from $\mathcal{P}_{k+4}$, $k\geq 3$.

Extending nonconforming elements to arbitrary dimensions $n$ is also a significant challenge. In this direction, there are the Morley-type elements \cite{wang2006morley} and the Zienkiewicz-type elements \cite{wang2007new} for $m=2$ on simplicial meshes. On $n$-rectangular meshes, \cite{wang2007some} introduced elements for $m=2$, and \cite{jin2023two} for $m=3$. 

The first comprehensive study of nonconforming elements that simultaneously addresses general \( m \) and \( n \) is the well-known Morley-Wang-Xu (MWX) family of elements \cite{wang2013minimal}, which elegantly integrates simplicial geometry, degree of freedom allocation, and convergence analysis.
The MWX elements are minimal in the sense that the local polynomial is $\mathcal{P}_m$, but they have a fundamental limitation: $m \leq n$. 
In \cite{wu2019nonconforming}, recognizing the similarity in degrees of freedom on sub-simplices, one of the authors, Wu, along with Xu, proposed a construction for the case $m = n+1$ by enriching $\mathcal{P}_{n+1}$ with $n$ volume bubble functions. To some extent, the unisolvence of the element mirrors the process of taking traces in Sobolev spaces, essentially forming an induction argument on $n$ along the line $m = n+1$.
However, the general construction of nonconforming elements for arbitrary $m$ and $n$ remains an open problem. In this paper, we provide a solution to this open problem, employing a different approach from previous works.

The key advantage of the new family of nonconforming finite elements is that, in the proof of unisolvence, an induction argument can be naturally carried out with respect to $m$ for any given $n$.
Specifically, the unisolvence of the element mirrors the process of
taking derivatives from $H^m$ to $H^{m-1}$ in Sobolev spaces. Achieving this at the discrete level requires careful coordination between the construction of both the degrees of freedom and the shape function space, which represents two key technical contributions of this paper:

\begin{enumerate}
\item  In constructing the degrees of freedom, we carefully distribute them across different subsimplices, ensuring that any function $v$ that vanishes at the degrees of freedom for $m$ will also have its first derivative vanish at the degrees of freedom for $m-1$ (see Lemma \ref{lm:LW-derivative}). This property, which is purely associated with the degrees of freedom, also holds for the MWX elements. Our design naturally extends the MWX-type degrees of freedom into a {\it multi-layer structure}, where the number of layers depends on the ceiling of $\frac{m}{n}$.
\item Based on this multi-layer structure of degrees of freedom, we first design a corresponding shape function space in \eqref{eq:shape} with a similar structure and provide a general equivalent integral-type representation. 
Notably, the differentiation required in the induction argument with respect to $m$ naturally fits into this integral-type representation.
Another crucial aspect of unisolvence is the introduction of the concept of {\it nonconforming bubble function}, which leads to a special form of equivalent integral-type representation (see Theorem \ref{tm:bl}). This plays a key role in the induction argument when the layers of degrees of freedom corresponding to $m$ and $m-1$ differ.
\end{enumerate}

By utilizing the key techniques outlined above, the proof of unisolvence is both universal and elegant (see Theorem \ref{tm:unisolvence}). Moreover, the elements are consistent with the MWX elements when $m \leq n$, and they degenerate to conforming elements when $n=1$. In terms of the number of local degrees of freedom, it is notably low. For the two-dimensional case, due to \eqref{eq:number-dof}, when $m=3,4,5$ the degrees of freedom are $\dim \mathcal{P}_3 + \dim \mathcal{P}_1 - \dim\mathcal{P}_0 = 12$, $\dim \mathcal{P}_4 + \dim \mathcal{P}_2 - \dim\mathcal{P}_1 = 18$, and $\dim \mathcal{P}_5 + \dim \mathcal{P}_3 - \dim\mathcal{P}_2 + \dim \mathcal{P}_1 - \dim\mathcal{P}_0 = 27$, respectively, while for the Hu-Zhang element \cite{hu2016canonical}, they are $15$, $21$, and $36$, respectively.

As a direct application of this new family of elements, we apply it to solve the following $m$th-Laplace equations with homogeneous boundary conditions:
\begin{equation} \label{eq:m-harmonic}
\left\{
\begin{aligned}
(-\Delta)^m u &= f \qquad \mbox{in }\Omega, \\
\frac{\partial^k u}{\partial \nu^k} &= 0 \qquad \mbox{on }\partial
\Omega, \quad 0 \leq k \leq m-1.
\end{aligned}
\right.
\end{equation}

Lastly, we note that in addition to the design of conforming or nonconforming elements, various numerical methods are available for solving high-order elliptic equations. These include $C^0$-interior penalty discontinuous Galerkin methods \cite{brenner2005c0, gudi2011interior, chen2022c0}, stabilized nonconforming methods \cite{wu2017pm}, and mixed methods \cite{arnold1985mixed, li1999full, li2006optimal, droniou2017mixed, schedensack2016new}. Beyond techniques that operate on discrete variational forms, another approach involves extending the local function space beyond polynomials, such as the rapidly evolving and increasingly important virtual element methods (VEM) \cite{beirao2013basic, beirao2014hitchhiker}. For conforming VEM, we refer to \cite{antonietti2020conforming, da2020c1, chen2022conforming}, and for nonconforming VEM, see \cite{huang2020nonconforming, chen2020nonconforming}.

The outline of the paper is as follows. Section \ref{sec:preliminary} begins with some preliminary concepts and provides an overview of the MWX elements and their design principles. In Section \ref{sec:element}, we define our elements and introduce a general integral-type representation of the shape function space. We establish the unisolvence for any $m,n \geq 1$ in Section \ref{sec:unisolvence}. Section \ref{sec:2m} applies the proposed elements to solve the typical $2m$-th order elliptic problems \eqref{eq:m-harmonic}. Section \ref{sec:numerical} provides some numerical results to validate our theoretical findings. Finally, Section \ref{sec:remarks} offers concluding remarks.


\section{Preliminary} \label{sec:preliminary}

This section introduces some basic notation and reviews the principles behind the construction of the Morley-Wang-Xu (MWX) elements \cite{wang2013minimal}, along with a discussion of key concepts in the convergence theory.

\subsection{Notation}
For an $n$-dimensional multi-index $\balpha = (\alpha_1, \cdots, \alpha_n)$, we define 
$$ 
|\balpha| = \sum_{i=1}^n \alpha_i, \quad 
\partial^\balpha = \frac{\partial^{|\balpha|}}{\partial x_1^{\alpha_1}
\cdots \partial x_n^{\alpha_n}}.
$$ 
Given an integer $k\geq 0$ and a bounded domain $G \subset \mathbb{R}^n$ with boundary $\partial G$, let $H^k(G), H_0^k(G),
\|\cdot\|_{k,G}$, and $|\cdot|_{k,G}$ denote the usual Sobolev spaces, norm, and semi-norm, respectively (c.f. \cite{adams2003sobolev}).

Assume that $\Omega$ is a bounded polyhedron domain of $\mathbb{R}^n$. Let $\mathcal{T}_h$ be a conforming and shape-regular simplicial triangulation of $\Omega$, and let $\mathcal{F}_h$ denote the set of all faces of $\mathcal{T}_h$.
Define $\mathcal{F}_h^i := \mathcal{F}_h \setminus \partial \Omega$.  Here, $h := \max_{T\in \mathcal{T}_h} h_T$, where $h_T$ is the diameter of $T$ (cf. \cite{ciarlet1978finite,
    brenner2007mathematical}).  We assume that $\mathcal{T}_h$ is
quasi-uniform, namely 
$$    
\exists \eta>0 \mbox{~~such that~~} \max_{T\in \mathcal{T}_h}
\frac{h}{h_T} \leq \eta,
$$
where the constant $\eta$ is independent of $h$. For the triangulation $\mathcal{T}_h$, and for $v \in L^2(\Omega)$ with $v|_T \in H^k(T)$ for all $T\in \mathcal{T}_h$, we define $\partial^{\balpha}_h v$ as the piecewise partial derivatives of $v$ for $|\balpha| \leq k$. We also define
$$ 
\|v\|_{k,h}^2 := \sum_{T \in \mathcal{T}_h}\|v\|_{k,T}^2, \quad
|v|_{k,h}^2 := \sum_{T\in \mathcal{T}_h} |v|_{k,T}^2.
$$ 

The linear space spanned by $v_1, \ldots, v_d$ is denoted by $\langle v_1, \ldots, v_d \rangle$. 
In the analysis, $C$ represents a generic positive constant that may vary between occurrences but is independent of the mesh size $h$. The notation $X \lesssim Y$ signifies that $X \leq C Y$.

\subsection{Review of Morley-Wang-Xu (MWX) elements}
To better understand the design principles of $H^m$-nonconforming elements, we first review the well-known MWX elements \cite{wang2013minimal}.

Let $T$ be an $n$-simplex. Following the description in \cite{ciarlet1978finite,brenner2007mathematical}, a finite element can be represented by a triple $(T, P_T, D_T)$, where $T$ denotes the geometric shape of the element, $P_T$ is the shape function space, and $D_T$ is the set of the degrees of freedom (DOF) that is $P_T$-unisolvent. For the family of MWX elements, the shape function spaces are minimal in the sense that $P_T^{(m,n)} = \mathcal{P}_m(T)$, but with the {\it inherent limitation} that $m \leq n$.

Let $\mathcal{F}_{T,k}$ denote the set of all $(n-k)$-dimensional sub-simplices of $T$. For any $F \in \mathcal{F}_{T,k}$, let $|F|$ represent its $(n-k)$-dimensional measure, and let $\nu_{F,1}, \ldots, \nu_{F,k}$ be linearly independent unit vectors orthogonal to the tangent space of $F$. Specifically, when $k = n$, $F$ represents a vertex and $|F| = 1$. 

For $k\geq 1$, let $A_k$ be the set consisting of all multi-indexes $\balpha$ with $\sum_{i=k+1}^n\alpha_i =0$. The core principle in constructing the MWX elements is the precise distribution of DOF across the sub-simplices. 
Specifically, the set of DOF of MWX element is defined as given in \cite[Equ. (2.3)--(2.4)]{wang2013minimal}: 
\begin{equation}  \label{eq:MWX-DOF}
    D_T^{(m,n)} := \{d_{T,F,\balpha} : \balpha \in A_k \text{ with } |\balpha| = m-k, 1\leq k\leq m, F\in \mc F_{T,k}\},
\end{equation}
where for any $1 \leq k \leq m$, any $(n-k)$-dimensional sub-simplex $F \in \mathcal{F}_{T,k}$, and $\balpha \in A_k$ with $|\balpha| = m-k$, 
\begin{equation} \label{eq:MWX-DOF2}
 d_{T,F,\balpha}(v) := \frac{1}{|F|}\int_F \frac{\partial^{m-k} v}{\partial \nu^{\alpha_1}_{F,1} \cdots \partial\nu^{\alpha_k}_{F,k}} 
 \quad \forall v\in H^{m}(T).
\end{equation}

In addition to being naturally defined on $H^m(T)$ (cf. \cite{adams2003sobolev}), set of DOF \eqref{eq:MWX-DOF} has the following notable properties.
\begin{lemma}[Lemma 2.1 in \cite{wang2013minimal}]  \label{lm:MWX-derivative}
    If all the degrees of freedom defined in \eqref{eq:MWX-DOF}
    vanish, then for any $0\leq k \leq m$, any $(n-k)$-dimensional
    sub-simplex $F\in \mc F_{T,k}$, it holds that for any $v \in H^m(T)$,
    \begin{equation} \label{eq:MWX-derivative}
      \frac{1}{|F|}\int_F \nabla^{m-k} v = 0,
    \end{equation}
    where $\nabla^j$ is the $j$th Hessian tensor for any integer $j \geq 0$.
\end{lemma}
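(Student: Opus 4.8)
The plan is to prove the (apparently stronger) conclusion about the full Hessian tensor by first reducing it to directional derivatives and then inducting on $k$, decreasing from $k=m$ down to $k=0$. Since $\nabla^{m-k}v$ is a symmetric tensor, the identity $\frac{1}{|F|}\int_F \nabla^{m-k}v = 0$ is equivalent to $\int_F \partial_{w_1}\cdots\partial_{w_{m-k}} v = 0$ for every tuple of vectors $w_1,\dots,w_{m-k}\in\mathbb{R}^n$. Because $F$ is an affine $(n-k)$-simplex, its tangent space and the normal directions $\nu_{F,1},\dots,\nu_{F,k}$ are globally well defined along $F$; fixing a basis of $\mathbb{R}^n$ made of a basis of the tangent space of $F$ together with $\nu_{F,1},\dots,\nu_{F,k}$ and using multilinearity, it suffices to treat tuples in which each $w_i$ is either one of the $\nu_{F,j}$ or a fixed tangential basis vector. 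All derivatives occurring here are of order at most $m$ and the traces used on $F$ are well defined for $v\in H^m(T)$, as already noted, so by density of $C^\infty(\bar T)$ it is enough to argue for smooth $v$.

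For the base case $k=m$, $F$ is a vertex, $m-k=0$, and $\frac{1}{|F|}\int_F v = v(F) = d_{T,F,\mathbf{0}}(v) = 0$. For the inductive step, assume $\frac{1}{|F'|}\int_{F'}\nabla^{m-k-1}v = 0$ for all $F'\in\mathcal{F}_{T,k+1}$, fix $F\in\mathcal{F}_{T,k}$ and a tuple $(w_1,\dots,w_{m-k})$ of basis vectors as above. If every $w_i$ is a normal direction, then $\int_F \partial_{w_1}\cdots\partial_{w_{m-k}} v = |F|\, d_{T,F,\balpha}(v)$, where $\balpha\in A_k$ records the multiplicities of the $\nu_{F,j}$ among the $w_i$ (so $|\balpha| = m-k$), and this vanishes by the hypothesis that all degrees of freedom vanish.

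If instead some $w_i$ is tangential, we may reorder (mixed partials commute) and assume $w_1$ is tangential to $F$. Setting $g := \partial_{w_2}\cdots\partial_{w_{m-k}}v$, integration by parts on the flat simplex $F$ yields $\int_F \partial_{w_1} g = \sum_{F'} (w_1\cdot\mu_{F,F'}) \int_{F'} g$, where $F'$ ranges over the codimension-one facets of $F$, each lying in $\mathcal{F}_{T,k+1}$, and $\mu_{F,F'}$ is the constant outward unit co-normal of $F'$ within the affine hull of $F$; the scalar $w_1\cdot\mu_{F,F'}$ pulls out because $w_1$ is a fixed vector. Now $g$ is one component of the tensor $\nabla^{m-k-1}v$, so $\int_{F'} g = 0$ by the induction hypothesis, hence $\int_F \partial_{w_1}\cdots\partial_{w_{m-k}} v = 0$. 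Combining the two cases closes the induction (the case $k=0$ being the degenerate one where every direction is tangential and the argument is just repeated integration by parts on $T$).

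I expect the only delicate points to be bookkeeping rather than conceptual: justifying the reduction to directional components, carrying out the integration-by-parts identity on the lower-dimensional simplex $F$ while keeping its intrinsic co-normal $\mu_{F,F'}$ carefully distinct from the ambient normals $\nu_{F,j}$, and checking that each trace invoked is legitimate for $v\in H^m(T)$. The heart of the matter — that a tangential derivative along $F$ is converted by integration by parts into integrals over the sub-simplices in $\mathcal{F}_{T,k+1}$, to which the inductive hypothesis applies — is short.
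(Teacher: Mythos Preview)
Your proposal is correct and follows essentially the same argument as the paper (and the original proof in \cite{wang2013minimal}): downward induction on $k$ starting from vertices, splitting at the inductive step into the purely normal case (which is a vanishing degree of freedom) and the case with a tangential factor (handled by Green's lemma on $F$, reducing to the inductive hypothesis on the facets $F'\in\mathcal{F}_{T,k+1}$). Your explicit polarization to directional derivatives is just a notational variant of the paper's multi-index bookkeeping, and your inclusion of $k=0$ matches the paper's remark that this case---omitted in the original---follows by integration by parts on $T$.
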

Let us remark here that the original proof did not consider the case in which $k=0$ (i.e., $F = T$). However, this case can be handled by simply performing integration by parts over $T$. 

Clearly, \eqref{eq:MWX-derivative} implies that all the degrees of freedom in $D_T^{(m,n)}$ vanish, making these two conditions equivalent. Such an equivalence demonstrates that any function vanishing in $D_T^{(m,n)}$ will also vanish in $D_T^{(m-1,n)}$ after taking any first-order derivative. Combined with the minimal shape function space of the MWX elements, the unisolvence of MWX elements can thus be directly derived through induction on $m$ (see \cite[Lemma 2.2]{wang2013minimal}).

For a given nonconforming element and triangulation $\mathcal{T}_h$, the corresponding finite element space is denoted by $V_h^{(m,n)}$. For the $2m$-th order elliptic problem, the key concepts governing the convergence of the nonconforming finite element method are weak continuity and the weak zero-boundary condition, which are defined as follows.

\begin{definition}[weak continuity \& weak zero-boundary condition] \label{df:weak-continuity}
    For $|\balpha|<m$, $F$ be an $(n-1)$-dimension sub-simplex of
    $T\in \mathcal{T}_h$. Then, for any $v_h \in V_h^{(m,n)}$,
    $\partial_h^{\balpha}v_h$ is continuous at a point on $F$ at least.
    If $F\subset \partial \Omega$, then $\partial_h^{\balpha}v_h$ vanishes
    at point on $F$ at least.  
\end{definition}

For the MWX elements, these properties can be directly derived from Lemma \ref{lm:MWX-derivative}, further highlighting the critical importance of this lemma.
In the following sections, we will extend the design of degrees of freedom for the MWX elements
to a universal construction of nonconforming finite elements with arbitrary $m,n \geq 1$, such that
properties in Lemma \ref{lm:MWX-derivative} is preserved. 

\section{Nonconforming Finite Elements} \label{sec:element}
In this section, we shall construct a universal family of nonconforming finite elements for arbitrary $H^{m}$ in any dimension $n \geq 1$.

\subsection{Degrees of freedom} 

Given an $n$-simplex $T$ with vertices $a_i, 0 \leq i \leq n$, let
$\lambda_0, \lambda_1, \ldots, \lambda_{n}$ be the barycenter
coordinates of $T$. Below, we present the degrees of freedom for arbitrary $m, n \geq 1$, which can be viewed as a natural extension of the MWX elements given in \eqref{eq:MWX-DOF2}.

For $1\leq k\leq n$, any $(n-k)$-dimensional sub-simplex $F\in \mc F_{T,k}$ and $\balpha \in A_k$ with $|\balpha| = m-\ell n-k$ (here, $\ell \in \mathbb{N}$ such that $m- \ell n-k\geq 0$),  
we define
\begin{equation} \label{eq:DOF2}
  d_{T,F,\balpha}(v) := \frac{1}{|F|}\int_F \frac{\partial^{m-\ell n-k} v}{\partial \nu^{\alpha_1}_{F,1} \cdots \partial\nu^{\alpha_k}_{F,k}} \quad \forall v\in H^{m}(T).
\end{equation}
For $k = n$, i.e. $F=a$ , $\int_F f = f(a)$ by convention represents the vertex evaluation. 

By the Sobolev embedding theorem (cf. \cite{adams2003sobolev}), $d_{T,F,\balpha}$ are continuous linear functionals on $H^m(T)$. 
Then, the set of the degrees of freedom is
\begin{equation}  \label{eq:DOF}
  \begin{aligned}
 D_T^{(m,n)} := \{ & d_{T,F,\balpha} : \balpha \in A_k \text{ with } | \balpha | = m-\ell n-k \\
 & \text{ where }\ell \in\mathbb{N} \text{ s.t. } m-\ell n-k\geq 0,F\in \mc F_{T,k},1\leq k\leq n\}.
  \end{aligned}
\end{equation}
We also number the local degrees of freedom by
 $$ d_{T,1},d_{T,2}, \ldots, d_{T,J},$$
 where $J$ is the number of local degrees of freedom. The degrees of freedom are depicted in Table \ref{tab:DOF}.
 
We provide a layer-wise characterization of \eqref{eq:DOF}. To this end, we first define the ceiling function of a real number $x$, denoted by $\lceil x \rceil$, as the smallest integer greater than or equal to $x$. Then, the value of $\ell$ in \eqref{eq:DOF} can only range between $0$ and $\lceil \frac{m}{n} \rceil - 1$. Accordingly, we can categorize the degrees of freedom \eqref{eq:DOF2} into different layers based on the value of $\ell$, as follows:
\begin{equation} \label{eq:DOF-layer}
\begin{aligned}
D_T^{(m,n)} & = \bigcup_{\ell=0}^{\lceil \frac{m}{n} \rceil - 1}
\left\{ 
\begin{aligned}
 d_{T,F,\balpha} : ~& \balpha \in A_k \text{ with } | \balpha | = m-\ell n-k, \\
 & F\in \mc F_{T,k},1\leq k\leq \min\{n, m-\ell n\}
 \end{aligned}
 \right\} \\
 & :=  \bigcup_{\ell=0}^{\lceil \frac{m}{n} \rceil - 1} D_{T,\ell}^{(m,n)}.
 \end{aligned}
\end{equation}
 
 \begin{remark}[natural extension of MWX elements]
By comparing \eqref{eq:DOF2} with \eqref{eq:MWX-DOF2}, it is evident that in the case considered by the MWX elements, where $m \leq n$, we have $\lceil \frac{m}{n} \rceil - 1=0$. In other words, the MWX elements only involve degrees of freedom of the $0$-th layer, which is consistent with our definition given in \eqref{eq:DOF-layer} when $m \leq n$.
 \end{remark}
 
 \begin{table}[!htbp]
\caption{arbitrary $m,n \geq 1$: diagrams of the finite elements. The case in which $m \leq n$ coincides with the Morley-Wang-Xu element.}
\begin{center}
\begin{tabular}{>{\centering\arraybackslash}m{.6cm} |
>{\centering\arraybackslash}m{2.8cm} |
>{\centering\arraybackslash}m{2.8cm} |
>{\centering\arraybackslash}m{2.8cm} @{}m{0pt}@{} }
\hline
$m \backslash n$ & 1 & 2 & 3 \\ \hline
1 &
\includegraphics[width=1.1in]{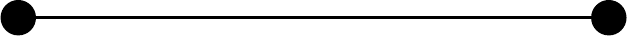}
& 
\includegraphics[width=1.1in]{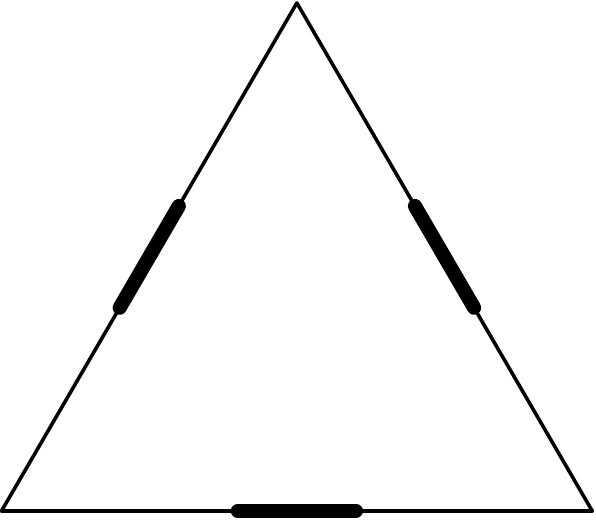}
& 
\includegraphics[width=1.1in]{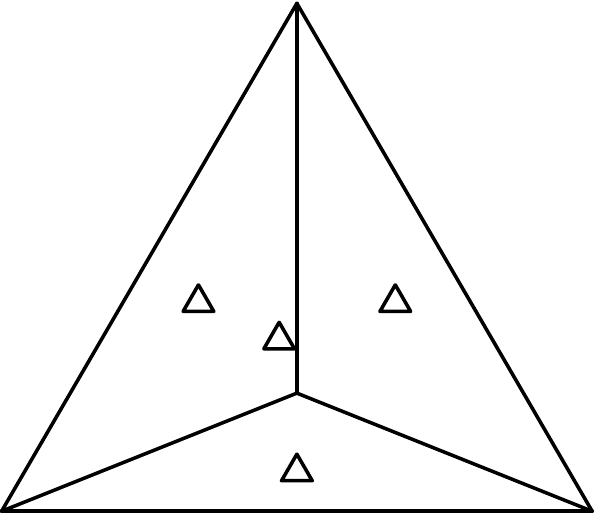} \\ 
\hline 
2 & 
\includegraphics[width=1.1in]{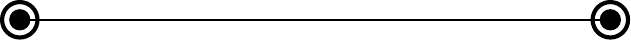}
& 
\includegraphics[width=1.1in]{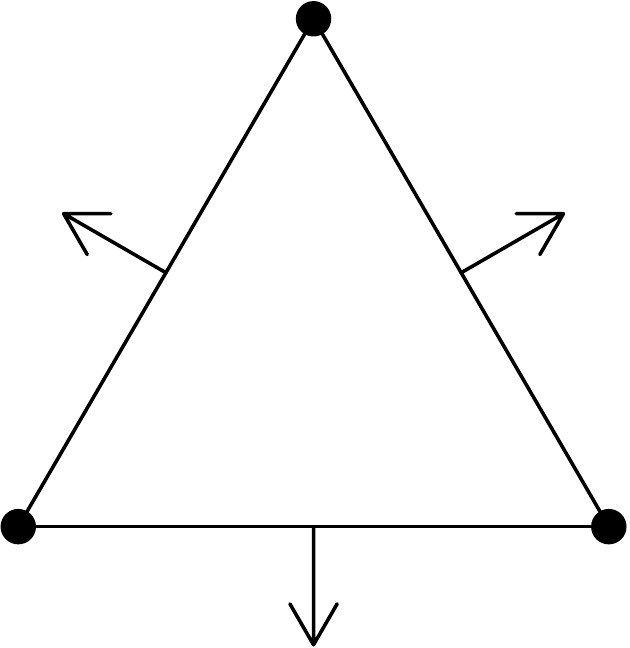}
&
\includegraphics[width=1.1in]{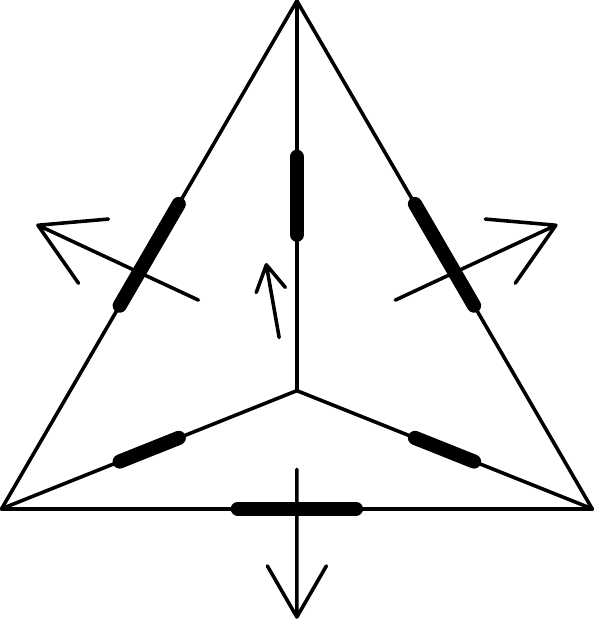}
\\ \hline 
3 &
\includegraphics[width=1.1in]{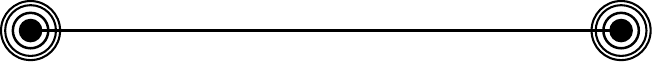} 
&
\includegraphics[width=1.1in]{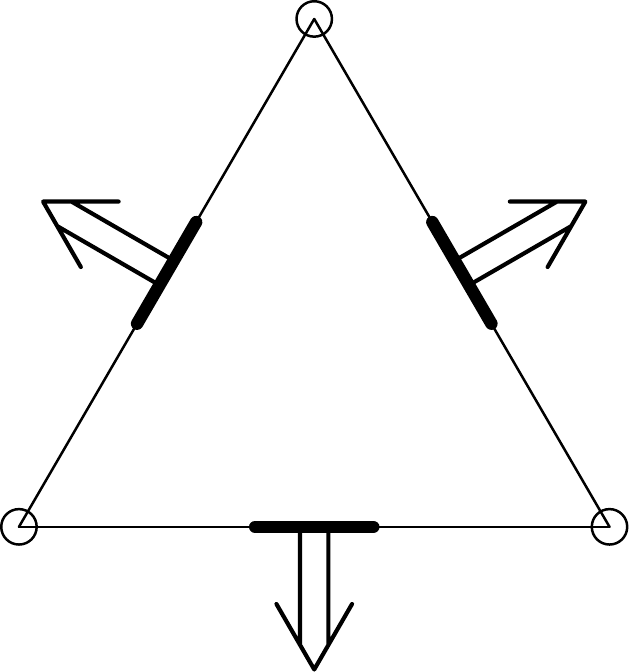}
&
\includegraphics[width=1.1in]{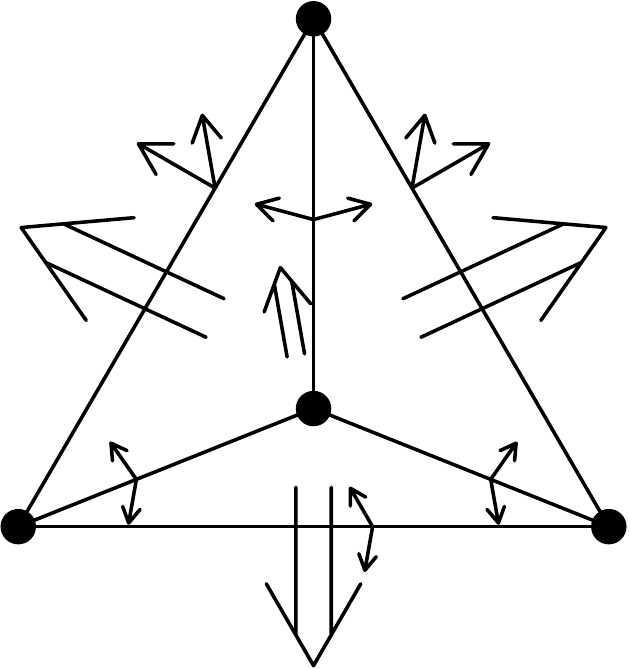}
\\ \hline
4 & 
\includegraphics[width=1.1in]{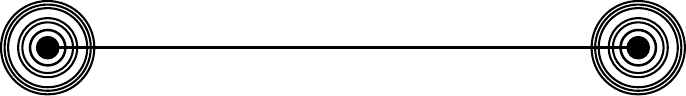}
&
\includegraphics[width=1.1in]{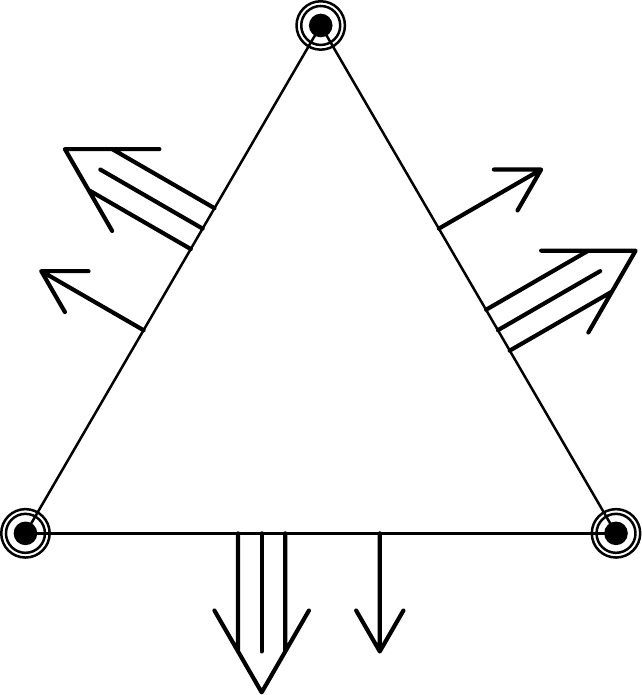}
&
\includegraphics[width=1.1in]{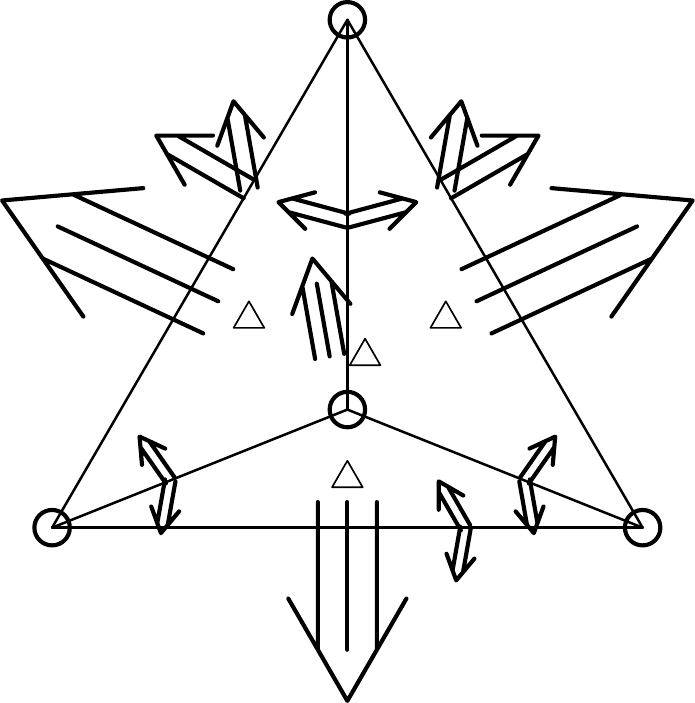}
\\ \hline 
5 & 
\includegraphics[width=1.1in]{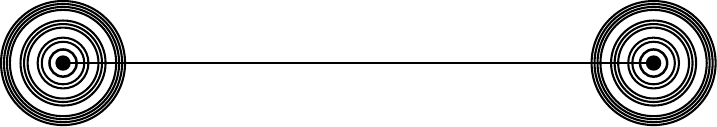}
&
\includegraphics[width=1.1in]{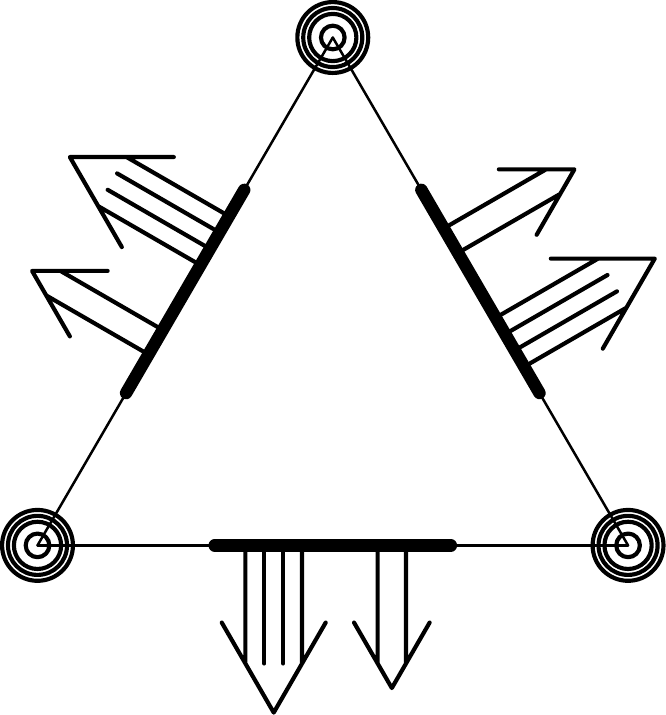}
&
\includegraphics[width=1.1in]{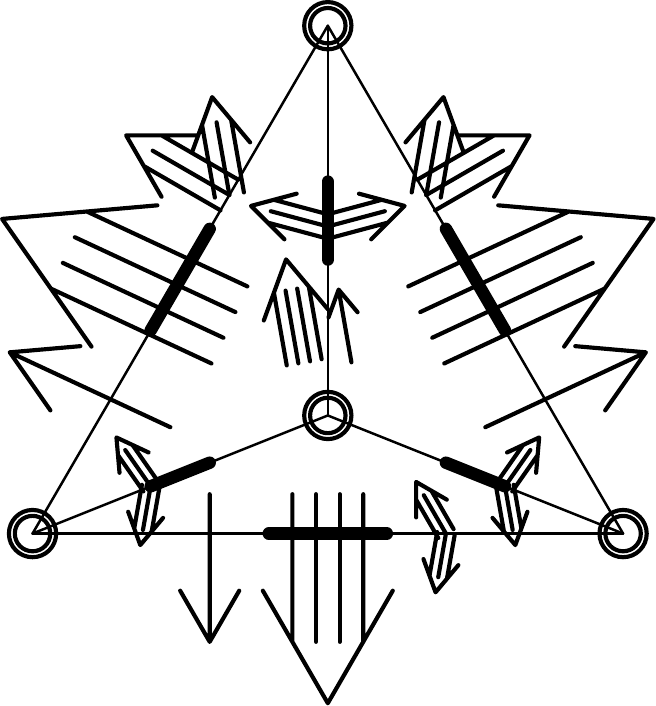}
\\ \hline 
\end{tabular}
\label{tab:DOF}
\end{center}
\end{table}
 
  \begin{lemma}[number of degrees of freedom] \label{lm:number-dof} 
  For any $m,n \geq 1$, it holds that 
  \begin{equation} \label{eq:number-dofl}
  \# D_{T,\ell}^{(m,n)} = \left\{
  \begin{aligned}
  \dim \mc P_{m - \ell n}(T) - \dim\mc P_{m - \ell n - n-1}(T) &\quad \text{for } 0 \leq \ell < \lceil \frac{m}{n} \rceil - 1, \\
  \dim \mathcal{P}_{m-\ell n}(T) &\quad \text{for } \ell = \lceil \frac{m}{n} \rceil - 1.
  \end{aligned}
  \right.
  \end{equation}
  As a consequence, by summing and reordering the above, we arrive at
  \begin{equation} \label{eq:number-dof}
   \#D_T^{(m,n)} = \dim \mc P_m(T) + \sum_{\ell = 1}^{\lceil \frac{m}{n} \rceil - 1} \left(\dim \mc P_{m - \ell n}(T) - \dim\mc P_{m - \ell n-1}(T)\right).
  \end{equation}
  \end{lemma}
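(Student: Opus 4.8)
The plan is to count each layer $D_{T,\ell}^{(m,n)}$ separately, establish \eqref{eq:number-dofl}, and then combine the layers by a telescoping-style rearrangement to obtain \eqref{eq:number-dof}. Fix $\ell$ with $0 \le \ell \le \lceil \tfrac{m}{n}\rceil - 1$ and set $p := m - \ell n$, so that $p \ge 1$ by the constraint $m - \ell n - k \ge 0$ with $k \ge 1$; note that $\ell = \lceil \tfrac{m}{n}\rceil - 1$ is exactly the case $p \le n$, whereas $\ell < \lceil \tfrac{m}{n}\rceil - 1$ corresponds to $p > n$. For a fixed $k$ with $1 \le k \le \min\{n,p\}$, the degrees of freedom in this layer attached to $k$-codimensional subsimplices are indexed by pairs $(F,\balpha)$ with $F \in \mathcal{F}_{T,k}$ and $\balpha \in A_k$, $|\balpha| = p-k$. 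The number of such $F$ is $\binom{n+1}{k}$ (the number of $(n-k)$-dimensional faces of an $n$-simplex), and the number of admissible $\balpha$ is the number of $k$-tuples of nonnegative integers summing to $p-k$, i.e. $\binom{p-1}{k-1}$. Hence
\begin{equation} \label{eq:layer-count-raw}
\# D_{T,\ell}^{(m,n)} = \sum_{k=1}^{\min\{n,p\}} \binom{n+1}{k}\binom{p-1}{k-1}.
\end{equation}

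Next I would identify the right-hand side of \eqref{eq:layer-count-raw} with the claimed dimension counts. The cleanest route is to recognize that this is exactly the MWX counting identity applied with ``$m$'' replaced by $p$: when $p \le n$, the sum runs over all $1 \le k \le p$, and the standard MWX unisolvence (or a direct Vandermonde-type identity) gives $\sum_{k=1}^{p}\binom{n+1}{k}\binom{p-1}{k-1} = \dim \mathcal{P}_p(T) = \binom{n+p}{n}$; this handles the case $\ell = \lceil \tfrac{m}{n}\rceil - 1$. When $p > n$ the sum is truncated at $k = n$, so it falls short of $\binom{n+p}{n}$ by precisely the ``missing'' terms $k = n+1, \dots, p$ (which would correspond to $\balpha \in A_{n}$-type contributions of degree $< p-n$, i.e. the polynomials of degree $\le p-n-1$). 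Making this precise, $\sum_{k=n+1}^{p}\binom{n+1}{k}\binom{p-1}{k-1}$ — after shifting indices and using $\binom{n+1}{k} = \binom{n+1}{n+1-k}$ — equals $\binom{n+(p-n-1)}{n} = \dim \mathcal{P}_{p-n-1}(T)$, giving $\# D_{T,\ell}^{(m,n)} = \dim\mathcal{P}_p(T) - \dim\mathcal{P}_{p-n-1}(T)$, which is \eqref{eq:number-dofl} for $\ell < \lceil \tfrac{m}{n}\rceil - 1$. Alternatively, one can avoid combinatorial identities entirely by invoking the already-known MWX unisolvence: $\sum_{k=1}^{q}\binom{n+1}{k}\binom{q-1}{k-1} = \dim\mathcal{P}_q(T)$ holds for every $1 \le q \le n$, and then \eqref{eq:number-dofl} follows by taking $q = p$ in the subcritical layer and $q = p, \, q = p-n$ and subtracting in the supercritical layers — I would present whichever is shorter.

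Finally, to get \eqref{eq:number-dof} I would sum \eqref{eq:number-dofl} over $\ell = 0, \dots, \lceil \tfrac{m}{n}\rceil - 1$. The $\ell = 0$ term is $\dim\mathcal{P}_m(T) - \dim\mathcal{P}_{m-n-1}(T)$ (or $\dim\mathcal{P}_m(T)$ if $m \le n$), and for $1 \le \ell \le \lceil \tfrac{m}{n}\rceil - 2$ each term contributes $\dim\mathcal{P}_{m-\ell n}(T) - \dim\mathcal{P}_{m - \ell n - n - 1}(T) = \dim\mathcal{P}_{m-\ell n}(T) - \dim\mathcal{P}_{m - (\ell+1)n - 1}(T)$, while the top term $\ell = \lceil \tfrac{m}{n}\rceil - 1$ contributes $\dim\mathcal{P}_{m - \ell n}(T)$. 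Writing out the sum and reindexing, the negative part of the $\ell$-th term, $-\dim\mathcal{P}_{m-(\ell+1)n-1}(T)$, does not cancel against the positive part $+\dim\mathcal{P}_{m-(\ell+1)n}(T)$ of the $(\ell+1)$-st term — they differ by one in the subscript — so no full telescoping occurs; instead one simply regroups to obtain $\dim\mathcal{P}_m(T) + \sum_{\ell=1}^{\lceil m/n\rceil - 1}\bigl(\dim\mathcal{P}_{m-\ell n}(T) - \dim\mathcal{P}_{m-\ell n - 1}(T)\bigr)$, which is \eqref{eq:number-dof}. Care is needed at the boundary: when $n \mid m$ one must check that $m - (\lceil m/n\rceil - 1)n = n$ and the ``top layer'' formula is consistent, and when $m \le n$ the sum is empty and \eqref{eq:number-dof} reduces to $\dim\mathcal{P}_m(T)$ as it should.

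The main obstacle is the combinatorial identity in the middle step: verifying that the truncated sum $\sum_{k=1}^{\min\{n,p\}}\binom{n+1}{k}\binom{p-1}{k-1}$ evaluates to $\dim\mathcal{P}_p(T) - \dim\mathcal{P}_{p-n-1}(T)$ when $p > n$. This is where I would lean on the MWX unisolvence result rather than a bare hypergeometric manipulation, since the excerpt has already imported the MWX framework; the identification of the truncated tail with $\dim\mathcal{P}_{p-n-1}(T)$ via the symmetry $\binom{n+1}{k} = \binom{n+1}{n+1-k}$ is the one genuinely non-routine computation, and everything else is bookkeeping over the layer index $\ell$.
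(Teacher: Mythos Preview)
Your proposal is correct and follows essentially the same route as the paper: count each layer as $\sum_{k}\binom{n+1}{k}\binom{p-1}{k-1}$ with $p=m-\ell n$, evaluate via the Vandermonde convolution, and then regroup over $\ell$ to get \eqref{eq:number-dof}. Two small remarks. First, in the case $p>n$ your ``tail'' $\sum_{k=n+1}^{p}\binom{n+1}{k}\binom{p-1}{k-1}$ has only the single nonzero term $k=n+1$ (since $\binom{n+1}{k}=0$ for $k>n+1$), giving $\binom{p-1}{n}=\dim\mathcal{P}_{p-n-1}(T)$ immediately with no index-shifting needed; this is exactly the paper's computation. Second, your proposed alternative via MWX unisolvence does not work as written: that identity is only available for $q\le n$, so you cannot invoke it at $q=p$ when $p>n$, and there is no way to reach the supercritical count from the $q\le n$ cases alone without passing through Vandermonde --- stick with the combinatorial argument, which is what the paper does.
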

  
  \begin{proof}
  Let \(\binom{a}{b}\) denote the binomial coefficient defined as 
\[
\binom{a}{b} := 
\begin{cases} 
\frac{a!}{b!(a-b)!} & \text{if } a \geq b, \\
0 & \text{otherwise}.
\end{cases}
\]
 Note that $\#\mathcal{F}_{T,k} = \binom{n+1}{n-k+1} = \binom{n+1}{k}$, and $\#\{\balpha \in A_k, |\balpha| = m - \ell n -k\} = \binom{m-\ell n - 1}{k-1}$, we count the number the degrees of freedom in following two cases.
 \begin{enumerate}
 \item Case 1: $0 \leq \ell < \lceil \frac{m}{n} \rceil - 1$. Then,
$$
\begin{aligned}
\#D_{T,\ell}^{(m,n)} &= \sum_{k=1}^n\binom{n+1}{n-k+1} \binom{m-\ell n - 1}{k-1} \\
&= \sum_{k'=0}^{\infty} \binom{n+1}{n-k'} \binom{m-\ell n - 1}{k'} - \binom{m-\ell n - 1}{n} \\
&= \binom{m-\ell n + n}{n} - \binom{m-\ell n - 1}{n} \\
&= \dim \mc P_{m - \ell n}(T) - \dim\mc P_{m - \ell n - n-1}(T).
\end{aligned}
 $$ 
\item Case 2: $\ell = \lceil \frac{m}{n} \rceil - 1$. Then, 
$$
\begin{aligned}
\#D_{T,\ell}^{(m,n)} &= \sum_{k=1}^{m-\ell n}\binom{n+1}{n-k+1} \binom{m-\ell n - 1}{k-1} \\
&= \sum_{k'=0}^{\infty} \binom{n+1}{n-k'} \binom{m-\ell n - 1}{k'} \\
&= \binom{m-\ell n + n}{n}  = \dim \mc P_{m - \ell n}(T).
\end{aligned}
 $$ 
 \end{enumerate} 
 Thus, the proof of \eqref{eq:number-dofl} is complete.
\end{proof}

Analogous to Lemma \ref{lm:MWX-derivative}, the degrees of freedom \eqref{eq:DOF} satisfy the following crucial property.
The proof essentially involves a recursive application of Green's lemma. For completeness, we provide a sketch of the proof; the remaining details are identical to those in the proof of \cite[Lemma 2.1]{wang2013minimal}.

\begin{lemma}[equivalence for vanishing DOF]  \label{lm:LW-derivative}
Let $0 \leq \ell \leq \lceil \frac{m}{n} \rceil -1$ and $D_{T,\ell}^{(m,n)}$ is defined in \eqref{eq:DOF-layer}. Then, for any $v \in H^m(T)$, the vanishing of all degrees of freedom in $D_{T,\ell}^{(m,n)}$ is equivalent to
    \begin{equation} \label{eq:LW-derivative-l}
      \frac{1}{|F|}\int_F \nabla^{m-\ell n - k} v = 0
       \quad \forall F\in \mc F_{T,k}, \forall 0\leq k \leq \min\{n,m-\ell n\},
    \end{equation}
    where $\nabla^j$ is the $j$th Hessian tensor for any integer $j \geq 0$. As a consequence, for any $v \in H^m(T)$, the vanishing of all degrees of freedom in $D_{T}^{(m,n)}$ is equivalent to
        \begin{equation} \label{eq:LW-derivative}
    \frac{1}{|F|}\int_F \nabla^{m-\ell n-k}v = 0 \quad \forall F\in \mc F_{T,k}, \forall 0\leq k \leq n, m-\ell n - k \geq 0.
    \end{equation}
\end{lemma}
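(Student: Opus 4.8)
The plan is to prove the equivalence in \eqref{eq:LW-derivative-l} for each fixed layer index $\ell$, and then \eqref{eq:LW-derivative} follows immediately by taking the union over $0 \leq \ell \leq \lceil m/n\rceil - 1$ as in \eqref{eq:DOF-layer}. Fix $\ell$ and write $p := m - \ell n$ for brevity, so that the claim for this layer is: for $v \in H^m(T)$, all $d_{T,F,\balpha}(v) = 0$ with $|\balpha| = p - k$, $F \in \mc F_{T,k}$, $1 \leq k \leq \min\{n,p\}$, if and only if $\frac{1}{|F|}\int_F \nabla^{p-k}v = 0$ for all $F \in \mc F_{T,k}$ and all $0 \leq k \leq \min\{n,p\}$. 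Note that $v \in H^m(T)$ guarantees $\nabla^{p-k}v \in H^{m-p+k}(T) \supseteq H^{\ell n + k}(T)$, which has enough regularity (via Sobolev embedding / trace theorems) for all the face integrals of derivatives up to order $p - k$ to make sense; this is the same regularity bookkeeping that underlies \eqref{eq:DOF2}.

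The ``if'' direction is trivial: the functionals in $D_{T,\ell}^{(m,n)}$ are, by \eqref{eq:DOF2}, particular directional-derivative components of the tensors $\frac{1}{|F|}\int_F \nabla^{p-k}v$, so if those tensors vanish then so do all the $d_{T,F,\balpha}(v)$. The substance is the ``only if'' direction, and the plan is to run a downward induction on $k$ from $k = \min\{n,p\}$ to $k = 0$, exactly mirroring \cite[Lemma 2.1]{wang2013minimal}. At the top level $k = \min\{n,p\}$: if $k = p \leq n$ then $\balpha = 0$ and the DOF are simply $\frac{1}{|F|}\int_F v$ over $0$-dimensional faces (vertices), which together with the lower-layer structure gives the base case directly; if $k = n < p$, then on an edge $F \in \mc F_{T,n}$ the relevant $\balpha$ with $|\balpha| = p - n$ run over all of $A_n$, i.e. all multi-indices in the $n$ transversal directions, so the vanishing of all $d_{T,F,\balpha}(v)$ is precisely the vanishing of every component of the constant-on-the-point tensor $\nabla^{p-n}v$ restricted to $F$ — here one still must recover the tangential components along $F$, but for $k=n$ the face is a point so there is no tangential direction and $\nabla^{p-n}v|_F$ is fully determined by the transversal components $\partial^{\balpha}v$, $\balpha \in A_n$. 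For the inductive step, assume \eqref{eq:LW-derivative-l} holds for all faces of dimension $< n - k$ (i.e. for sub-simplices in $\mc F_{T,k'}$ with $k' > k$); to prove $\frac{1}{|F|}\int_F \nabla^{p-k}v = 0$ for $F \in \mc F_{T,k}$, decompose each component of $\nabla^{p-k}v$ into a purely transversal part (a $\partial^{\balpha}$ with $\balpha \in A_k$, $|\balpha| = p-k$, which vanishes by hypothesis since $d_{T,F,\balpha}(v)=0$) and a part involving at least one tangential derivative along $F$; for the latter, integrate by parts over $F$, pushing the tangential derivative onto the (smooth) test data and reducing the boundary contribution to integrals of $\nabla^{p-k-1}v$ over the $(n-k-1)$-faces $\partial F$, i.e. over faces in $\mc F_{T,k+1}$, where it vanishes by the inductive hypothesis. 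This is precisely Green's lemma applied recursively on the sub-simplex $F$, and it is what the excerpt calls ``a recursive application of Green's lemma.''

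The main obstacle — and the only place where something genuinely beyond the MWX argument is needed — is the bookkeeping at the interface between layers, namely handling $k = 0$ (the case $F = T$, which the excerpt explicitly notes was missing from \cite{wang2013minimal}) and, more importantly, verifying that the layer-by-layer statement is actually self-contained: each $D_{T,\ell}^{(m,n)}$ only involves faces of dimension $\geq n - \min\{n, p\}$, and the induction on $k$ within layer $\ell$ never reaches down to information that would require DOF from layer $\ell' \neq \ell$. The reason this works is structural: within layer $\ell$, the differential order $p - k$ is locked to the face codimension $k$, so the Green's-lemma recursion stays inside the faces $\mc F_{T,k}$, $1 \leq k \leq \min\{n,p\}$, of that single layer and closes up at the vertex level (or, when $p > n$, at the edge level $k = n$, where the transversal directions already span everything). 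The $k = 0$ case is then obtained, as the remark after Lemma \ref{lm:MWX-derivative} indicates, by one final integration by parts over $T$ itself: write each component of $\nabla^{p}v$ (resp. $\nabla^{p - \ell n}v$) with at least one genuine derivative, integrate by parts over $T$, and bound the boundary term by the already-established vanishing of $\frac{1}{|F|}\int_F \nabla^{p-1}v$ over $F \in \mc F_{T,1}$. Having closed the induction for an arbitrary fixed $\ell$, one concludes \eqref{eq:LW-derivative} by ranging over $\ell$, and I would simply refer to \cite[Lemma 2.1]{wang2013minimal} for the verbatim details of the Green's-lemma computations on each sub-simplex, since they are unchanged.
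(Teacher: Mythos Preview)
Your argument is correct and follows essentially the same route as the paper's sketch: fix a layer $\ell$, run a downward induction on $k$ from $k=\min\{n,p\}$ to $k=0$, split each component of $\nabla^{p-k}v$ into a purely transversal part (which is a DOF and hence vanishes) and a part with at least one tangential derivative (handled by Green's lemma on $F$ together with the inductive hypothesis on $\partial F\subset\mc F_{T,k+1}$), and close the case $k=0$ by one more integration by parts over $T$. One small slip to clean up: in your base case $k=p\le n$, the faces $F\in\mc F_{T,p}$ are $(n-p)$-dimensional sub-simplices, not vertices (and you call the $k=n$ faces ``edges'' before correctly noting they are points); the reasoning is unaffected, since in the former case the DOF is still simply the mean $\frac{1}{|F|}\int_F v$, which is exactly the $k=p$ instance of \eqref{eq:LW-derivative-l}.
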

\begin{proof}[Sketch of proof] When $0 \leq \ell < \lceil \frac{m}{n} \rceil - 1, k=n$ or $\ell = \lceil \frac{m}{n} \rceil - 1, k = m - \ell n$, the lemma is obvious true, since in the former case $F$ is a vertex, and in the latter case it only involves the average of the function values on $F$.

Assume that it is true for all $k = i+1, \ldots, \min\{n,m-\ell n\}$. We consider the case in which $k = i$. For any $(n-k)$-dimensional simplex $F$ and $|\balpha| = m - \ell n - k$, the proof is then divided into two cases. In the first case, when $\alpha_{k+1} = \cdots = \alpha_n = 0$, we have $\frac{1}{|F|}\int_F \partial^{\balpha}v = d_{T,F,\balpha}(v) = 0$. In the second case, where $\partial^{\balpha}v$ includes at least one tangential derivative, we apply Green's lemma and the inductive hypothesis to complete the proof.
\end{proof}

\subsection{Shape function space}
As introduced in the MWX elements, the convergence of nonconforming elements is closely linked to weak continuity and the design of degrees of freedom. However, a core challenge in constructing such element lies in finding shape function space that align with the degrees of freedom. We begin by presenting the construction of the shape function space
\begin{equation}  \label{eq:shape}
    P_T := P_T^{(m,n)} := \sum_{\ell=0}^{ \lceil \frac{m}{n} \rceil - 1} \lambda_1^{\ell(n+1)} \mathcal P_{m-\ell n}(T).
\end{equation}
Due to the inherent symmetry, in the shape function space \eqref{eq:shape}, the barycentric coordinate $\lambda_1$ can be consistently replaced by any $\lambda_i$ for $i=0, \dots, n$ in each term of the summation. For simplicity, we choose to work with $\lambda_1$.

\begin{remark}[1D case]
From \eqref{eq:shape}, it is easy to see that when $n=1$,
$P_T^{(m,1)} =  \sum_{\ell=0}^{m-1} \lambda_1^{2\ell} \mathcal P_{m- \ell}(T) = \mathcal{P}_{2m-1}(T)$.
At this point, the degrees of freedom given by \eqref{eq:DOF} correspond to the values at the two vertices, along with the values of the first derivative up to the $(m-1)$-th derivative. That is, the elements become the standard one-dimensional $C^{m-1}$-conforming elements.
\end{remark}

\begin{lemma}[dimension of $P_T^{(m,n)}$] \label{lm:dim-PT}
It holds that
\begin{equation} \label{eq:dim-PT}
 \dim P_T^{(m,n)} = \dim \mc P_m(T) + \sum_{\ell = 1}^{\lceil \frac{m}{n} \rceil - 1} \left(\dim \mc P_{m - \ell n}(T) - \dim\mc P_{m - \ell n-1}(T)\right),
\end{equation}
which shows that $\dim P_T^{(m,n)} = \# D_T^{(m,n)}$ by using \eqref{eq:number-dof}.
\end{lemma}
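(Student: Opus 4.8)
The plan is to compute $\dim P_T^{(m,n)}$ directly from the definition \eqref{eq:shape} by analyzing the overlaps between consecutive terms in the sum $\sum_{\ell=0}^{\lceil m/n\rceil-1}\lambda_1^{\ell(n+1)}\mc P_{m-\ell n}(T)$, and then match the result against \eqref{eq:number-dof}. Write $L := \lceil \frac{m}{n}\rceil - 1$ and $W_\ell := \lambda_1^{\ell(n+1)}\mc P_{m-\ell n}(T)$, so that $P_T^{(m,n)} = \sum_{\ell=0}^{L} W_\ell$. Each $W_\ell$ is isomorphic to $\mc P_{m-\ell n}(T)$ via multiplication by the nonzero polynomial $\lambda_1^{\ell(n+1)}$, hence $\dim W_\ell = \dim \mc P_{m-\ell n}(T)$. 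The key structural observation is that the spaces are \emph{nested up to the multiplier} in the sense relevant for a telescoping dimension count: I claim
\begin{equation} \label{eq:plan-nest}
\Big(\sum_{\ell'=0}^{\ell-1} W_{\ell'}\Big) \cap W_\ell = \lambda_1^{\ell(n+1)} \mc P_{m-\ell n - 1}(T),
\end{equation}
for each $1 \leq \ell \leq L$. Granting this, the standard inclusion–exclusion identity $\dim(A+B) = \dim A + \dim B - \dim(A\cap B)$ applied inductively gives
\begin{equation} \label{eq:plan-sum}
\dim P_T^{(m,n)} = \sum_{\ell=0}^{L} \dim W_\ell - \sum_{\ell=1}^{L} \dim\big(\lambda_1^{\ell(n+1)}\mc P_{m-\ell n-1}(T)\big) = \dim\mc P_m(T) + \sum_{\ell=1}^{L}\big(\dim\mc P_{m-\ell n}(T) - \dim\mc P_{m-\ell n-1}(T)\big),
\end{equation}
which is exactly \eqref{eq:dim-PT}; the final equality with $\#D_T^{(m,n)}$ is then immediate from \eqref{eq:number-dof}.

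To prove \eqref{eq:plan-nest}, the containment $\supseteq$ is the easy direction: any $q \in \mc P_{m-\ell n-1}(T)$ satisfies $\lambda_1^{\ell(n+1)} q = \lambda_1^{(\ell-1)(n+1)}\cdot(\lambda_1^{n+1} q)$ with $\lambda_1^{n+1}q \in \mc P_{m-(\ell-1)n}(T)$ since $\deg(\lambda_1^{n+1}q) \leq n+1 + m-\ell n - 1 = m - (\ell-1)n$, so $\lambda_1^{\ell(n+1)}q \in W_{\ell-1} \subseteq \sum_{\ell'<\ell}W_{\ell'}$; and of course $\lambda_1^{\ell(n+1)}q \in W_\ell$. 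For the reverse containment $\subseteq$, suppose $p \in W_\ell$ also lies in $\sum_{\ell'=0}^{\ell-1}W_{\ell'}$. Since each $W_{\ell'}$ with $\ell' < \ell$ is divisible by $\lambda_1^{\ell'(n+1)}$ and $\ell' \leq \ell-1$, the whole sum $\sum_{\ell'<\ell}W_{\ell'}$ is contained in $\lambda_1^{(\ell-1)(n+1)}\mc P_{m-(\ell-1)n}(T)$ — more precisely every element of the sum is $\lambda_1^{(\ell-1)(n+1)}$ times a polynomial of degree $\leq m-(\ell-1)n$ (for the term with index $\ell' < \ell-1$ note $\lambda_1^{\ell'(n+1)}\mc P_{m-\ell' n} = \lambda_1^{(\ell-1)(n+1)}\big(\lambda_1^{-(\ell-1-\ell')(n+1)}\mc P_{m-\ell' n}\big)$ and a monomial count shows $\lambda_1^{(\ell-1-\ell')(n+1)} \mid$ the image — this bookkeeping is the fiddly part). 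Thus $p = \lambda_1^{(\ell-1)(n+1)} r$ with $\deg r \leq m-(\ell-1)n$, and also $p = \lambda_1^{\ell(n+1)} s$ with $\deg s \leq m - \ell n$. Since $\mathbb{R}[\lambda]$ (equivalently the coordinate ring, using that $\lambda_1$ is a nonzerodivisor) is an integral domain, dividing by $\lambda_1^{(\ell-1)(n+1)}$ yields $r = \lambda_1^{n+1} s$, so $\lambda_1^{n+1} \mid r$; writing $r = \lambda_1^{n+1} s$ with $\deg r \leq m-(\ell-1)n = m - \ell n + n$ forces $\deg s \leq m - \ell n + n - (n+1) = m - \ell n - 1$. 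Hence $p = \lambda_1^{\ell(n+1)} s \in \lambda_1^{\ell(n+1)}\mc P_{m-\ell n-1}(T)$, establishing \eqref{eq:plan-nest}.

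I expect the main obstacle to be making the divisibility argument in the reverse containment fully rigorous: one is working with polynomials restricted to the simplex $T$, so I must first note that the restriction map $\mc P_d(\mathbb{R}^n) \to \mc P_d(T)$ is an isomorphism and that $\lambda_1$, being a nonconstant affine function, is not a zero divisor in the polynomial ring, so that "$\lambda_1^{j} p$ restricted to $T$ determines $p$" is legitimate and divisibility statements can be checked in $\mathbb{R}[x_1,\dots,x_n]$ (or in $\mathbb{R}[\lambda_0,\dots,\lambda_n]$ after a change of coordinates in which $\lambda_1$ is a single variable). A clean way to organize this is to change coordinates so $\lambda_1$ becomes one of the variables, say $t$, reducing everything to: if $t^{a}r(t,\cdot) = t^{b}s(t,\cdot)$ in $\mathbb{R}[t,\dots]$ with $a<b$ then $t^{b-a}\mid r$. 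The degree bookkeeping across the several terms $W_{\ell'}$, $\ell' < \ell - 1$, in showing the inclusion $\sum_{\ell'<\ell}W_{\ell'}\subseteq \lambda_1^{(\ell-1)(n+1)}\mc P_{m-(\ell-1)n}(T)$ is routine but is the place where an off-by-one slip is most likely; everything else is the elementary inclusion–exclusion telescoping in \eqref{eq:plan-sum} together with the already-proved Lemma \ref{lm:number-dof}.
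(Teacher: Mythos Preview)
Your overall architecture is exactly the paper's: set $V_{\ell-1}=\sum_{\ell'<\ell}W_{\ell'}$, prove $V_{\ell-1}\cap W_\ell=\lambda_1^{\ell(n+1)}\mc P_{m-\ell n-1}(T)$, and telescope. Your $\supseteq$ direction is correct and identical to the paper's. The gap is in the $\subseteq$ direction.

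The claimed inclusion
\[
\sum_{\ell'<\ell} W_{\ell'}\ \subseteq\ \lambda_1^{(\ell-1)(n+1)}\mc P_{m-(\ell-1)n}(T)
\]
is false whenever $\ell\geq 2$: the summand $W_0=\mc P_m(T)$ contains the constant $1$, which is not divisible by $\lambda_1$. Your parenthetical ``monomial count shows $\lambda_1^{(\ell-1-\ell')(n+1)}\mid$ the image'' is asserting that every polynomial in $\mc P_{m-\ell' n}(T)$ is divisible by a positive power of $\lambda_1$, which is simply not true. So the step producing $p=\lambda_1^{(\ell-1)(n+1)}r$ with $\deg r\leq m-(\ell-1)n$ does not follow, and the rest of the divisibility chain collapses.

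The paper's argument for $\subseteq$ avoids divisibility entirely and is a one-line degree count: since $W_{\ell'}\subset\mc P_{m+\ell'}(T)$, one has $V_{\ell-1}\subset\mc P_{m+\ell-1}(T)$. If $p=\lambda_1^{\ell(n+1)}s\in V_{\ell-1}$ with $s\in\mc P_{m-\ell n}(T)$, then $\ell(n+1)+\deg s\leq m+\ell-1$, i.e.\ $\deg s\leq m-\ell n-1$, so $p\in\lambda_1^{\ell(n+1)}\mc P_{m-\ell n-1}(T)$. Replacing your divisibility paragraph with this degree bound repairs the proof completely; note that the coordinate-change and zero-divisor discussion you anticipated is then unnecessary.
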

\begin{proof}
Let $V_t := \sum_{\ell = 0}^t \lambda_1^{\ell(n+1)} \mathcal P_{m-\ell n}(T)$. We first show
\begin{equation} \label{eq:Vt}
V_{t-1}\cap \lambda_1^{t(n+1)}\mc P_{m-tn}(T) = \lambda_1^{t(n+1)}\mc P_{m-tn-1}(T) \quad \forall t \geq 1.
\end{equation}
On one hand, for any $p \in \mc P_{m-tn-1}(T)$, 
$$
\lambda_1^{t(n+1)}p = \lambda_1^{(t-1)(n+1)}\cdot \lambda_1^{n+1}p \in \lambda_1^{(t-1)(n+1)}\mc P_{m-(t-1)n}(T)\subset V_{t-1},
$$
whence $\lambda_1^{t(n+1)}\mc P_{m-tn-1}(T)\subset V_{t-1}\cap \lambda_1^{t(n+1)}\mc P_{m-tn}(T)$.

On the other hand, for any $p\in \mc P_{m-tn}(T)$ such that $\lambda_1^{t(n+1)}p \in V_{t-1}$, the fact that $V_{t-1} \subset \mathcal{P}_{m+t-1}(T)$ implies that $p \in \mathcal{P}_{m-tn-1}(T)$. Therefore, $\lambda_1^{t(n+1)}p \in \lambda_1^{t(n+1)}\mc P_{m-tn-1}(T)$, and hence $V_{t-1}\cap \lambda_1^{t(n+1)}\mc P_{m-tn}(T) \subset \lambda_1^{t(n+1)}\mc P_{m-tn-1}(T)$. This proves \eqref{eq:Vt}.

Using \eqref{eq:Vt}, and noticing that $V_t = V_{t-1} + \lambda_1^{t(n+1)}\mc P_{m-tn}(T)$, we have 
$$
\begin{aligned}
\dim V_{t} &= \dim V_{t-1} + \dim \mc P_{m-tn}(T) - \dim\left(V_{t-1}\cap \lambda_1^{t(n+1)}\mc P_{m-tn}(T) \right), \\
&=  \dim V_{t-1} + \dim \mc P_{m-tn}(T) - \dim\mc P_{m-tn-1}(T), \quad \forall t \geq 1.
\end{aligned}
$$    
This, together with the fact that $\dim V_0 = \dim \mathcal{P}_m(T)$, leads to \eqref{eq:dim-PT}.
\end{proof} 


\subsection{Space representation with integral terms}
The shape function space \eqref{eq:shape} has an equivalent integral-type representation, which is useful in proving its unisolvence.
We firstly introduce the following notation to simplify the representation of integral on one variable.

\begin{definition}[integral on $\blam$]
For a polynomial $f$ defined on $T$, let $f = f(\lambda_1,\ldots,\lambda_n) := f(\blam)$, define the integral
\begin{equation}
  (I_{0,\lambda_i}f)(\blam) := \int_0^{\lambda_i}f(\lambda_1,\ldots,\lambda_{i-1},s,\lambda_{i+1},\ldots,\lambda_n)\,\mathrm{d} s.
\end{equation} 
To write the multiple integral in a more compact form, we introduce the following notation:
\begin{equation}
  (I_{0,\blam}^{\balpha}f)(\blam):=I_{0,\lambda_1}^{\alpha_1}I_{0,\lambda_2}^{\alpha_2} \cdots I_{0,\lambda_n}^{\alpha_n}f(\blam) \quad \forall\balpha=(\alpha_1,\alpha_2,\ldots,\alpha_n),\alpha_i\geq 0.
\end{equation}
\end{definition}

\begin{lemma}[equivalent integral-type representation]     \label{lm:PT-integral}
Let $g_\ell$ be any single variable polynomial with degree $\ell(n+1)$. Then, for any integer $t\geq 0$, we have
 \begin{equation} \label{eq:PT-integral}
     \sum_{\ell = 0}^t \lambda_1^{\ell(n+1)} \mc P_{m-\ell n}(T) = 
     \mathcal{P}_m(T) + \sum_{\ell =1}^t \left\langle I_{0,\blam}^\balpha g_\ell(\lambda_1): |\balpha| = m - \ell n \right\rangle.
    \end{equation}
  \end{lemma}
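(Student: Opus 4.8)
The plan is to prove the identity \eqref{eq:PT-integral} by induction on $t$, matching the recursive structure already used in Lemma \ref{lm:dim-PT}. For $t=0$ both sides equal $\mathcal{P}_m(T)$ and there is nothing to prove. For the inductive step, write $V_t = \sum_{\ell=0}^t \lambda_1^{\ell(n+1)}\mathcal{P}_{m-\ell n}(T)$ and let $W_t$ denote the right-hand side of \eqref{eq:PT-integral}; assuming $V_{t-1} = W_{t-1}$, it suffices to show that
\[
\lambda_1^{t(n+1)}\mathcal{P}_{m-tn}(T) + V_{t-1} \;=\; \big\langle I_{0,\blam}^{\balpha} g_t(\lambda_1) : |\balpha| = m - tn \big\rangle + V_{t-1}.
\]
So the real content is a single-layer statement: modulo the lower layers $V_{t-1}$, the space $\lambda_1^{t(n+1)}\mathcal{P}_{m-tn}(T)$ coincides with the span of the iterated integrals $I_{0,\blam}^{\balpha}g_t(\lambda_1)$ with $|\balpha| = m-tn$.

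To establish that single-layer equivalence, I would analyze the iterated integral $I_{0,\blam}^{\balpha}g_t(\lambda_1)$ for a fixed multi-index $\balpha$ with $|\balpha| = m-tn$. Since $g_t$ depends only on $\lambda_1$, the integrations $I_{0,\lambda_i}$ for $i \geq 2$ simply multiply by $\lambda_i^{\alpha_i}/\alpha_i!$, while $I_{0,\lambda_1}^{\alpha_1}$ applied to the degree-$t(n+1)$ polynomial $g_t(\lambda_1)$ produces a polynomial in $\lambda_1$ of degree $t(n+1)+\alpha_1$ whose lowest-degree term is $c\,\lambda_1^{t(n+1)+\alpha_1}$ with a nonzero constant $c$ (because the leading coefficient of $g_t$ is nonzero and each antiderivative raises the order cleanly when evaluated from $0$). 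Hence
\[
I_{0,\blam}^{\balpha} g_t(\lambda_1) \;=\; \frac{c}{\alpha_2!\cdots\alpha_n!}\,\lambda_1^{t(n+1)+\alpha_1}\lambda_2^{\alpha_2}\cdots\lambda_n^{\alpha_n} \;+\; (\text{lower order in }\lambda_1).
\]
Here "lower order in $\lambda_1$" means terms $\lambda_1^j \lambda_2^{\alpha_2}\cdots\lambda_n^{\alpha_n}$ with $t(n+1) \le j < t(n+1)+\alpha_1$, and each such term, having $\lambda_1$-degree at least $t(n+1) \ge (t-1)(n+1) + (n+1)$ and total degree at most $t(n+1)+\alpha_1 - 1 \le m + t - 1$, lies in $\lambda_1^{(t-1)(n+1)}\mathcal{P}_{m-(t-1)n}(T) \subset V_{t-1}$; the same reasoning I used in Lemma \ref{lm:dim-PT}. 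Therefore, modulo $V_{t-1}$, each $I_{0,\blam}^{\balpha}g_t(\lambda_1)$ is (a nonzero multiple of) $\lambda_1^{t(n+1)}$ times the monomial $\lambda_1^{\alpha_1}\lambda_2^{\alpha_2}\cdots\lambda_n^{\alpha_n}$ of degree $|\balpha| = m-tn$. As $\balpha$ ranges over all multi-indices of weight $m-tn$, these monomials span $\mathcal{P}_{m-tn}(T)$, so the span of the integrals, taken modulo $V_{t-1}$, is exactly $\lambda_1^{t(n+1)}\mathcal{P}_{m-tn}(T)$ modulo $V_{t-1}$, which is what we needed.

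The main obstacle I anticipate is the bookkeeping needed to confirm that every lower-order term genuinely falls into $V_{t-1}$ and that the top term is never accidentally annihilated — i.e. that the constant $c$ above is really nonzero for every admissible $\balpha$. Both points hinge on the hypothesis that $\deg g_t = t(n+1)$ exactly (with nonzero leading coefficient): this guarantees that $I_{0,\lambda_1}^{\alpha_1}g_t$ has $\lambda_1$-degree exactly $t(n+1)+\alpha_1$ and nonzero coefficient there, and simultaneously that no term of $I_{0,\blam}^{\balpha}g_t$ has $\lambda_1$-degree below $t(n+1)$, so nothing escapes the containment in $V_{t-1}$. A small amount of care is also needed because different $\balpha$ can contribute the same leading monomial direction only after reduction — but since we are comparing two subspaces of equal, already-computed dimension (by Lemma \ref{lm:dim-PT} the left side has the expected dimension, and the quotient count on the right matches), it is enough to verify the inclusion of the integral span into $V_t$ together with a dimension count, or equivalently the surjectivity onto the quotient $V_t/V_{t-1}$; I would phrase the final argument as a surjection-plus-dimension argument to avoid checking linear independence of the integrals directly. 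Once the single-layer claim is in hand, summing over $\ell = 1,\dots,t$ and invoking the inductive hypothesis closes the proof.
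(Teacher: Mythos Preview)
Your inductive strategy and quotient formulation (work modulo $V_{t-1}$ and isolate the leading monomial of $I_{0,\blam}^{\balpha}g_t$) match the paper's approach and would go through, but your accounting of the ``lower order in $\lambda_1$'' terms contains a genuine error. Writing $g_t(\lambda_1)=\sum_{i=0}^{t(n+1)} c_i\lambda_1^{\,i}$, the $\alpha_1$-fold primitive $I_{0,\lambda_1}^{\alpha_1}g_t$ contributes terms $\tilde c_i\,\lambda_1^{\,i+\alpha_1}$ for every $i$ with $c_i\neq 0$, so after multiplying by $\lambda_2^{\alpha_2}\cdots\lambda_n^{\alpha_n}$ the $\lambda_1$-degree $j=i+\alpha_1$ ranges over $\alpha_1\le j\le t(n+1)+\alpha_1$, not $t(n+1)\le j$. (Incidentally, the term $c\,\lambda_1^{t(n+1)+\alpha_1}$ you single out is the \emph{highest}-degree term, not the lowest.) Your claim that ``no term of $I_{0,\blam}^{\balpha}g_t$ has $\lambda_1$-degree below $t(n+1)$'' is therefore false whenever $g_t$ has any coefficient $c_i\neq 0$ with $i<t(n+1)-\alpha_1$, and your placement of all remainder terms into $\lambda_1^{(t-1)(n+1)}\mathcal P_{m-(t-1)n}(T)$ breaks for those.

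The repair is exactly the case split the paper performs on the index $i$. For $0\le i\le tn$ the total degree is $i+|\balpha|=i+m-tn\le m$, so the term already lies in $\mathcal P_m(T)\subset V_{t-1}$. For $tn<i<t(n+1)$, set $\ell_0:=i-tn\in\{1,\dots,t-1\}$; then $\ell_0(n+1)=(i-tn)(n+1)\le i\le i+\alpha_1$ and $i+|\balpha|-\ell_0(n+1)=m-\ell_0 n$, so the term lies in $\lambda_1^{\ell_0(n+1)}\mathcal P_{m-\ell_0 n}(T)\subset V_{t-1}$. With this correction your argument that $I_{0,\blam}^{\balpha}g_t \equiv c\,\lambda_1^{t(n+1)}\blam^{\balpha}\pmod{V_{t-1}}$ with $c\neq 0$ is valid, and since those monomials form a basis of $V_t/V_{t-1}$ by \eqref{eq:Vt}, your surjection-plus-dimension conclusion closes the induction. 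This is equivalent to the paper's two-sided inclusion proof; your quotient phrasing simply replaces its Step~2 by the dimension count.
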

  \begin{proof}
    We prove it by induction on $t$. For $t=0$, the lemma is trivial. Suppose the lemma is true for $t-1$, we will prove it for $t$. 
    Let $g_t(\lambda_1) = \sum_{i=0}^{t(n+1)} c_i \lambda_1^i$ with $c_{t(n+1)} \neq 0$. For any multi-index $\bbeta$, we have 
    $$
    I_{0,\blam}^\bbeta g_t(\lambda_1) = \sum_{i=0}^{t(n+1)} \tilde{c}_i \lambda_1^{i+\beta_1}\lambda_2^{\beta_2} \cdots \lambda_n^{\beta_n},
    $$
    where $\tilde{c}_i$ are some coefficients and obviously $\tilde{c}_{t(n+1)} \neq 0$.
  
    {\it Step 1.} For any $|\balpha|=m-tn$, we then consider the term $\lambda_1^{i+\alpha_1}\lambda_2^{\alpha_2} \cdots \lambda_n^{\alpha_n}$ in two cases.
    \begin{enumerate}
    \item Case 1: $i \leq tn$. Obviously, $\lambda_1^{i+\alpha_1}\lambda_2^{\alpha_2} \cdots \lambda_n^{\alpha_n} \in \mathcal{P}_m(T)$.
    \item Case 2: $tn<i\leq t(n+1)$, i.e., $0 < \ell_0 := i-tn \leq t$. Then, some elementary calculations show that 
    $$ 
    \begin{aligned}
    \ell_0(n+1) &= (i-tn)(n+1) \leq i \leq i + \alpha_1,\\
    i+|\balpha| - \ell_0(n+1) &= m - \ell_0 n. 
    \end{aligned}
    $$ 
    This implies $\lambda_1^{i+\alpha_1}\lambda_2^{\alpha_2} \cdots \lambda_n^{\alpha_n} \in \lambda_i^{\ell_0(n+1)} \mc P_{m-\ell_0 n}(T)$.
    \end{enumerate}
    Combining the above two cases, we have $\lambda_1^{i+\alpha_1}\lambda_2^{\alpha_2} \cdots \lambda_n^{\alpha_n} \in \sum_{\ell = 0}^t \lambda_1^{\ell(n+1)} \mc P_{m-\ell n}(T)$, so does $I_{0,\blam}^\balpha g_t(\lambda_1)$. Therefore, 
    $$ 
    \left \langle I_{0,\blam}^\balpha g_t(\lambda_1): |\balpha| = m-tn \right\rangle \subset \sum_{\ell = 0}^t \lambda_1^{\ell(n+1)} \mc P_{m-\ell n}(T),
    $$
    which, together with the induction hypothesis, yields
    \begin{equation} \label{eq:PT-integral1}
    \mathcal{P}_m(T) + \sum_{\ell =1}^t \left\langle I_{0,\blam}^\balpha g_\ell(\lambda_1): |\balpha| = m - \ell n \right\rangle \subset \sum_{\ell = 0}^t \lambda_1^{\ell(n+1)} \mc P_{m-\ell n}(T).
    \end{equation}
  
  {\it Step 2.} We intend to show that for any $\theta \leq t(n+1)$, $|\bbeta| \leq m - tn$, 
    \begin{equation} \label{eq:PT-integral-result}
    p:=\lambda_1^\theta \lambda_1^{\beta_1}...\lambda_n^{\beta_n} \in \mathcal{P}_m(T) + \sum_{\ell =1}^t \left\langle I_{0,\blam}^\balpha g_\ell(\lambda_1): |\balpha| = m - \ell n \right\rangle,
    \end{equation}
    The proof of \eqref{eq:PT-integral-result} can be done by considering the following three cases.
    
    \begin{enumerate}
    \item Case 1: $\theta + |\bbeta| \leq m$. Obviously, $p \in \mathcal{P}_m(T)$.
    \item Case 2: $m < \theta + |\bbeta| < m+t$, i.e., $0 < \ell_0 := \theta + |\bbeta| - m < t$. Then, similar to the Case 2 of {\it Step 1}, we have 
    $$ 
    \begin{aligned}
    \ell_0(n+1) &= (\theta + |\bbeta| - m)(n+1) \leq \theta \leq \theta + \beta_1, \\
    \theta + |\bbeta| - \ell_0(n+1) &= m - \ell_0 n.
    \end{aligned}
    $$
    Therefore, by induction hypothesis, we have 
    $$ 
    \begin{aligned}
    p \in \lambda_i^{\ell_0(n+1)} \mc P_{m-\ell_0 n}(T) & \subset \sum_{\ell=0}^{t-1}\lambda_i^{\ell(n+1)} \mc P_{m-\ell n}(T) \\
    & \subset \mathcal{P}_m(T) + \sum_{\ell =1}^{t-1} \left\langle I_{0,\blam}^\balpha g_\ell(\lambda_1): |\balpha| = m - \ell n \right\rangle.
    \end{aligned}
    $$  
    \item Case 3: $\theta = t(n+1)$ and $|\bbeta|=m-tn$. In this case, taking $\balpha = \bbeta$, we have 
    $$
    \begin{aligned}
    \frac{1}{\tilde{c}_{t(n+1)}} I_{0,\blam}^{\balpha} g_t(\lambda_1) - p &= \sum_{i=0}^{t(n+1)-1} \frac{\tilde{c}_{i}}{\tilde{c}_{t(n+1)}} \lambda_i^{i+\beta_1} \lambda_2^{\beta_2} \cdots \lambda_n^{\beta_n} \\
    &\in \mathcal{P}_m(T) + \sum_{\ell =1}^{t-1} \left\langle I_{0,\blam}^\balpha g_\ell(\lambda_1): |\balpha| = m - \ell n \right\rangle.
    \end{aligned}
    $$ 
    Here, the last step comes from the fact that all the terms belong to either Case 1 or Case 2. Hence, \eqref{eq:PT-integral-result} holds for this case.
    \end{enumerate}
    Using \eqref{eq:PT-integral-result}, we have 
    \begin{equation} \label{eq:PT-integral2}
    \sum_{\ell = 0}^t \lambda_1^{\ell(n+1)} \mc P_{m-\ell n}(T) \subset
    \mathcal{P}_m(T) + \sum_{\ell =1}^t \left\langle I_{0,\blam}^\balpha g_\ell(\lambda_1): |\balpha| = m - \ell n \right\rangle. 
    \end{equation}
    The induction argument then completes by combining \eqref{eq:PT-integral1} and \eqref{eq:PT-integral2}.
  \end{proof}

  \begin{corollary}[general integral-type representation of $P_T^{(m,n)}$] \label{co:integral-PT}
    Let $g_\ell$ be any polynomial with degree $\ell(n+1)$. Then, we have
    \begin{equation} \label{eq:integral-PT}
      P_T^{(m,n)} = 
      \mathcal{P}_m(T) + \sum_{\ell =1}^{ \lceil \frac{m}{n} \rceil - 1} \left\langle I_{0,\blam}^\balpha g_\ell(\lambda_1): |\balpha| = m - \ell n \right\rangle.
    \end{equation}
  \end{corollary}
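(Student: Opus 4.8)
The plan is to obtain \eqref{eq:integral-PT} as a direct specialization of Lemma \ref{lm:PT-integral}. By the very definition \eqref{eq:shape} of the shape function space,
$$P_T^{(m,n)} = \sum_{\ell=0}^{\lceil \frac{m}{n}\rceil - 1} \lambda_1^{\ell(n+1)}\mathcal{P}_{m-\ell n}(T),$$
so \eqref{eq:integral-PT} is precisely the identity of Lemma \ref{lm:PT-integral} with the particular choice $t = \lceil \frac{m}{n}\rceil - 1$, where each $g_\ell$ is a univariate polynomial of exact degree $\ell(n+1)$. Thus the whole matter reduces to checking that this value of $t$, and the quantities appearing in the lemma for it, are admissible.

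The admissibility check is routine, and I would record it first. Since $\lceil x\rceil < x+1$ for every real $x$, we have $\lceil \frac{m}{n}\rceil - 1 < \frac{m}{n}$, hence $\ell n < m$ — equivalently $m - \ell n \geq 1$ — for every $\ell$ with $0 \leq \ell \leq \lceil \frac{m}{n}\rceil - 1$. Consequently each $\mathcal{P}_{m-\ell n}(T)$ is a genuine polynomial space, each constraint $|\balpha| = m-\ell n$ is satisfiable, and, because $m,n \geq 1$, the index $t = \lceil \frac{m}{n}\rceil - 1$ is a nonnegative integer, so Lemma \ref{lm:PT-integral} applies with this value of $t$. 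Substituting it into the lemma and reading off the two sides yields exactly \eqref{eq:integral-PT}.

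I do not expect a genuine obstacle here: the entire burden — the induction on $t$ and the three-case monomial bookkeeping of Step 2 in the proof of Lemma \ref{lm:PT-integral} — has already been discharged there, and the corollary is merely the shape-function-space reading of that lemma. The only point that would require an extra word is if one wished to read the hypothesis ``$g_\ell$ is any polynomial of degree $\ell(n+1)$'' as permitting $g_\ell$ to depend on all of $\lambda_1,\dots,\lambda_n$ with total degree $\ell(n+1)$, rather than just on $\lambda_1$; in that case I would argue that altering the terms of $g_\ell$ of degree strictly below $\ell(n+1)$ does not change the span $\left\langle I_{0,\blam}^{\balpha} g_\ell : |\balpha| = m - \ell n\right\rangle$ modulo $\mathcal{P}_m(T) + \sum_{\ell' < \ell}(\cdots)$, since each such lower-degree term, after applying $I_{0,\blam}^\balpha$, lands either in $\mathcal{P}_m(T)$ or in a strictly earlier layer — exactly the dichotomy exploited in Steps 1--2 of the lemma's proof. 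For the literal reading with $g_\ell(\lambda_1)$ univariate, nothing beyond the specialization above is needed.
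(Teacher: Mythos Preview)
Your proposal is correct and matches the paper's approach exactly: the paper states the corollary without a separate proof, treating it as the immediate specialization of Lemma~\ref{lm:PT-integral} with $t = \lceil m/n \rceil - 1$ together with the definition~\eqref{eq:shape}. Your admissibility check that $m - \ell n \geq 1$ for all relevant $\ell$ is a helpful addition, and your final paragraph about multivariate $g_\ell$ is unnecessary since both the lemma and the corollary explicitly write $g_\ell(\lambda_1)$.
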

    From this corollary we see that the shape function space can be represented by the integral form with {\it generator} $g_\ell$. The proper choice of the $g_\ell$ will facilitate the proof of unisolvence, as we will demonstrate in the next section. 

\section{Unisolvence} \label{sec:unisolvence}
This section discusses the unisolvence of the proposed finite elements, using a special integral-type representation of $P_T^{(m,n)}$
by using nonconforming bubbles from single-variable composition. For a single-variable function $f(x)$, we denote its $k$th order derivative by $f^{(k)}(x)$.

\begin{definition}[nonconforming bubble] \label{df:bubble}
Given a layer $\ell$, a function $b$ is called nonconforming bubble of
  layer $\ell$ on element $T$ if it satisfies 
$$ 
d_{T,j}(b)=0,\quad \forall d_{T,j}\in D_T^{(\ell n,n)},
$$
where we recall the degrees of freedom given in \eqref{eq:DOF}.
\end{definition}

It is evident that the nonconforming bubble in Definition \ref{df:bubble} is not unique. In the following, we will explore a specific type of nonconforming bubbles formed by the composition of single-variable functions and $\lambda_1$. These particular single-variable functions can be selected as generators in \eqref{eq:integral-PT}, leading to a specific integral-type representation of $P_T^{(m,n)}$.


\subsection{Nonconforming bubbles from single-variable composition}
We first present a unisolvence lemma for a single-variable function.

\begin{lemma}[unisolvence for single-variable function]   \label{lm:1d-unisolvence}
For any $s,t \in \mathbb{N}$ with $s + t \geq 1$, a polynomial $v \in \mathcal{P}_{s+t-1}(\mathbb{R})$ is uniquely determined by the following values:
  $$ 
  \begin{aligned}
  d_{i,0}(v) &:= v^{(i)}(0) \quad  i=0,1,\ldots,s-1, \\
  d_{j,1}(v) &:= v^{(\alpha_j)}(1) \quad j=0,1,\ldots,t-1,
  \end{aligned}
  $$
  where $\alpha_j \in \mathbb{N}$ satisfying $0\leq \alpha_0 < \alpha_1 < \cdots <\alpha_{t-1} \leq s+t-1$.
\end{lemma}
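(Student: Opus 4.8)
The plan is to prove this by a dimension-counting argument: the space $\mathcal{P}_{s+t-1}(\mathbb{R})$ has dimension $s+t$, and we are given exactly $s+t$ linear functionals, so it suffices to show that the only polynomial $v \in \mathcal{P}_{s+t-1}(\mathbb{R})$ annihilated by all of them is $v \equiv 0$. So suppose $v^{(i)}(0) = 0$ for $i = 0, 1, \ldots, s-1$ and $v^{(\alpha_j)}(1) = 0$ for $j = 0, 1, \ldots, t-1$, where $0 \leq \alpha_0 < \alpha_1 < \cdots < \alpha_{t-1} \leq s+t-1$.

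First I would use the $s$ conditions at $0$: since $v^{(i)}(0) = 0$ for $0 \leq i \leq s-1$, the Taylor expansion of $v$ at $0$ shows $v(x) = x^s w(x)$ for some polynomial $w \in \mathcal{P}_{t-1}(\mathbb{R})$. If $t = 0$ this already forces $v \equiv 0$ (a polynomial of degree $\leq s-1$ divisible by $x^s$), so assume $t \geq 1$. The remaining task is to show $w \equiv 0$ using the $t$ conditions at $1$. For each $j$, Leibniz's rule gives $v^{(\alpha_j)}(x) = \sum_{r} \binom{\alpha_j}{r} (x^s)^{(r)} w^{(\alpha_j - r)}(x)$; evaluating at $x = 1$ expresses $0 = v^{(\alpha_j)}(1)$ as a linear combination of $w(1), w'(1), \ldots, w^{(\alpha_j)}(1)$ (equivalently of $w^{(0)}(1),\ldots,w^{(\min\{\alpha_j, t-1\})}(1)$, since $w$ has degree $\leq t-1$). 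The key structural point is that the coefficient of the highest-order term $w^{(\min\{\alpha_j, t-1\})}(1)$ is nonzero: when $\alpha_j \leq t-1$ it is $\binom{\alpha_j}{0}(1^s) = 1 \neq 0$, and when $\alpha_j \geq t-1$ the relevant coefficient involves $\binom{\alpha_j}{\alpha_j - (t-1)}(x^s)^{(\alpha_j-(t-1))}|_{x=1} = \binom{\alpha_j}{\alpha_j-t+1}\cdot s(s-1)\cdots(s-\alpha_j+t) \neq 0$ provided $\alpha_j - t + 1 \leq s$, which holds since $\alpha_j \leq s+t-1$.

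Because $\alpha_0 < \alpha_1 < \cdots < \alpha_{t-1}$, the values $\min\{\alpha_j, t-1\}$ are nondecreasing in $j$; I would argue that in fact, ordering the conditions appropriately, the linear system in the unknowns $w(1), w'(1), \ldots, w^{(t-1)}(1)$ is triangular with nonzero diagonal. The cleanest way to see this: since there are $t$ strictly increasing indices $\alpha_0 < \cdots < \alpha_{t-1}$ all lying in $\{0, 1, \ldots, s+t-1\}$, one checks that $\alpha_j \geq j$ for all $j$, so $\min\{\alpha_j, t-1\} \geq \min\{j, t-1\} = j$ for $j \leq t-1$; combined with monotonicity this pins down that the $j$-th equation (after reordering) genuinely ``sees'' $w^{(j)}(1)$ with a nonzero coefficient while the equations $0, \ldots, j-1$ involve only $w^{(0)}(1), \ldots, w^{(j)}(1)$ — wait, I should be careful here and instead induct downward: the equation with the largest index, $\alpha_{t-1}$, has $\min\{\alpha_{t-1}, t-1\} = t-1$, so it determines $w^{(t-1)}(1) = 0$ from the lower-order values; then back-substitute into the equation for $\alpha_{t-2}$, and so on. A short induction on $j$ (from $t-1$ down to $0$) then yields $w^{(j)}(1) = 0$ for all $0 \leq j \leq t-1$, i.e., all Taylor coefficients of $w$ at $1$ vanish, so $w \equiv 0$ and hence $v \equiv 0$.

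The main obstacle I anticipate is making the triangularity argument fully rigorous — specifically, verifying that the coefficient of the top-order derivative in each Leibniz expansion is nonzero (this needs the hypothesis $\alpha_{t-1} \leq s+t-1$ in an essential way, which is exactly why that bound appears) and that the system can be ordered to be triangular. This is entirely a matter of bookkeeping with binomial coefficients and falling factorials, with no conceptual difficulty, but it requires care to state cleanly; one could alternatively phrase the whole argument via the Hermite-type interpolation / Pólya-condition framework, but the direct Leibniz computation is self-contained and fits the paper's style.
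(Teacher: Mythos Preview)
Your reduction is sound: factoring $v = x^s w$ with $w\in\mathcal P_{t-1}$ correctly absorbs all conditions at $0$, and the remaining task is to show that the $t$ conditions $(x^s w)^{(\alpha_j)}(1)=0$ force $w\equiv 0$. The gap is the triangularity claim. Take $s=t=2$, $\alpha_0=1$, $\alpha_1=2$: writing $v=x^2w$ with $w\in\mathcal P_1$, the two equations are
\[
v'(1)=2w(1)+w'(1)=0,\qquad v''(1)=2w(1)+4w'(1)=0,
\]
so \emph{both} equations involve \emph{both} unknowns $w(1),w'(1)$; the system is not triangular in either direction. Your back-substitution (``solve equation $t-1$ for $w^{(t-1)}(1)$, plug into equation $t-2$, \ldots'') works if and only if all trailing principal minors of the coefficient matrix are nonzero, and that is precisely what you are trying to prove --- the argument is circular. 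After factoring out row and column scalars the matrix is $\bigl[\binom{s}{\alpha_j-k}\bigr]_{j,k}$, whose nonsingularity is true (e.g.\ via a Lindstr\"om--Gessel--Viennot argument) but is a genuine identity, not ``bookkeeping with binomial coefficients''.

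The paper avoids this linear-algebra step entirely by inducting on $s+t$ and splitting on $\alpha_0$: if $\alpha_0>0$, differentiate and apply the hypothesis with $(s-1,t)$ and exponents $\alpha_j-1$; if $\alpha_0=0$, factor $v=(1-x)\tilde v$ and apply the hypothesis with $(s,t-1)$. Each move is elementary and uses the bound $\alpha_{t-1}\le s+t-1$ exactly once (to keep the shifted exponents in range). If you want to salvage your approach, the cleanest fix is to run this same dichotomy on $w$ rather than on $v$ --- but at that point you are essentially reproducing the paper's induction.
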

\begin{proof}
  We just need to verify that $v = 0$ when the degrees of freedom vanishes, since both the number degrees of freedom and dimension of $\mathcal{P}_{s+t-1}(\mathbb{R})$ are $s+t$. We prove it by induction on $s+t$. 
  
The result is trivial when $s+t=1$. Assuming it holds for $s+t-1$, we now consider the situation when $s+t$ and discuss it in the following four cases.
  
 \begin{enumerate}
 \item Case 1: $t=0$. Since $v\in \mathcal{P}_{s-1}(\mathbb{R})$ and $v^{(i)}(0)=0$ for $i=0,1,\ldots,s-1$, we easily see that $v=0$.
 \item Case 2: $s=0$. In this case, we must have $\alpha_j = j$ for $j=0,\ldots,t-1$ and hence $v = 0$.
 \item Case 3: $s,t>0$, $\alpha_0 > 0$. Consider $w=v'$, then $w\in \mathcal{P}_{s+t-2}(\mathbb{R})$ satisfying
  $$ 
  \begin{aligned}
  w^{(i)}(0) &= 0 \quad i=0,1,\ldots,s-2, \\
  w^{(\alpha_j-1)}(1) &= 0 \quad j = 0,1,\ldots,t-1.
  \end{aligned}
  $$
  Then, the induction hypothesis implies that $v'=w=0$. Notice that $s > 0$, we have $v(0) = 0$, whence $v = 0$.
 \item Case 4: $s,t>0$, $\alpha_0 = 0$, i.e., $v(1) = 0$. Then, there exists $\tilde{v} \in \mathcal{P}_{s+t-2}(\mathbb{R})$ such that $v = (1-x)\tilde{v}(x)$. Since $v^{(k)}(x) = (1-x)\tilde{v}^{(k)}(x) - k\tilde{v}^{(k-1)}(x)$, we have 
 $$
 \tilde{v}^{(\alpha_j -1)}(1) = -\frac{1}{\alpha_j}v^{(\alpha_j)} = 0 \quad j = 1, \ldots, t-1.
 $$ 
 Further, applying $v^{(i)}(0) = 0$ for $i=0,\ldots,s-1$ recursively yields 
 $$ 
 \tilde{v}^{(i)}(0) = 0 \quad i=0,\ldots,s-1.
 $$ 
 The induction hypothesis then implies that $\tilde{v} = 0$, so does $v$.
 \end{enumerate}
Combining the above cases, we complete the inductive proof.
\end{proof}

We are in the position to show the existence of the nonconforming bubbles.
\begin{theorem}[nonconforming bubbles from single-variable composition] \label{tm:bl}
  There exists a single-variable polynomial $b_\ell \in \mc P_{\ell(n+1)}(\mathbb{R})$ such that 
    \begin{equation} \label{eq:bl}
    \begin{aligned}
      b_\ell^{(i)}(0) &= 0,\quad i=0,1,...,\ell n-1,\\
      b_\ell^{(j)}(1) &= 0,\quad j=0,n,2n,...,(\ell - 1)n,
    \end{aligned}
  \end{equation}
 and $b_\ell(\lambda_1)$ is a nonconforming bubble of layer $\ell$ (see Definition \ref{df:bubble}). Moreover, if $b_\ell$ is monic, then it is unique.
\end{theorem}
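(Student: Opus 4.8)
The first and easiest part is the existence and uniqueness of the single-variable polynomial $b_\ell$ satisfying \eqref{eq:bl}. Apply Lemma \ref{lm:1d-unisolvence} with $s = \ell n$ and $t = \ell$, so that $s + t - 1 = \ell(n+1) - 1$ and the prescribed interpolation data are exactly the $\ell n$ derivative values at $0$ (orders $0, \dots, \ell n - 1$) and the $\ell$ derivative values at $1$ (orders $\alpha_j = jn$ for $j = 0, \dots, \ell-1$). The exponents $\alpha_0 = 0 < n < 2n < \cdots < (\ell-1)n$ are strictly increasing and bounded by $\ell(n+1)-1$ (since $(\ell-1)n < \ell(n+1)$), so the hypotheses of Lemma \ref{lm:1d-unisolvence} are met. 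This shows there is a unique $b_\ell \in \mathcal{P}_{\ell(n+1)-1}(\mathbb{R})$ with all these data equal to zero; adding one free parameter (the degree-$\ell(n+1)$ coefficient) and normalizing it to $1$ gives a unique monic $b_\ell \in \mathcal{P}_{\ell(n+1)}(\mathbb{R})$ satisfying \eqref{eq:bl}. This is really just bookkeeping on $s$, $t$, and the $\alpha_j$.

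\textbf{The main work: $b_\ell(\lambda_1)$ is a nonconforming bubble of layer $\ell$.} We must verify that $d_{T,j}\big(b_\ell(\lambda_1)\big) = 0$ for every $d_{T,j} \in D_T^{(\ell n, n)}$, i.e., for the degrees of freedom \eqref{eq:DOF} with $m$ replaced by $\ell n$. Write $v := b_\ell(\lambda_1)$. The plan is to compute $d_{T,F,\balpha}(v)$ directly from \eqref{eq:DOF2}. Because $v$ depends only on $\lambda_1$, its derivatives $\partial^{\balpha} v$ are (up to constant factors coming from $\nabla \lambda_1$ and the directional vectors) proportional to $b_\ell^{(|\balpha|)}(\lambda_1)$; more precisely a directional derivative $\partial_{\nu_{F,i}} v = (\nu_{F,i} \cdot \nabla \lambda_1)\, b_\ell'(\lambda_1)$, so $\frac{\partial^{q} v}{\partial \nu_{F,1}^{\alpha_1} \cdots \partial \nu_{F,k}^{\alpha_k}} = \Big(\prod_i (\nu_{F,i}\cdot\nabla\lambda_1)^{\alpha_i}\Big) b_\ell^{(q)}(\lambda_1)$ with $q = |\balpha|$. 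Thus $d_{T,F,\balpha}(v) = c_{F,\balpha}\, \frac{1}{|F|}\int_F b_\ell^{(q)}(\lambda_1)$ for a constant $c_{F,\balpha}$. It therefore suffices to show $\int_F b_\ell^{(q)}(\lambda_1) = 0$ for the relevant faces $F$ and derivative orders $q = \ell n - \ell' n - k$ with $1 \le k \le n$ and $\ell' \in \mathbb{N}$, $q \ge 0$. The key case split is on whether the face $F$ is contained in the hyperplane $\{\lambda_1 = 0\}$ (i.e., $F$ is a sub-simplex of the facet opposite to vertex $a_1$), lies in $\{\lambda_1 = 1\}$ (only the vertex $a_1$ itself, where $\lambda_1 = 1$), or is genuinely transversal to the level sets of $\lambda_1$. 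On faces inside $\{\lambda_1 = 0\}$, $b_\ell^{(q)}(\lambda_1) \equiv b_\ell^{(q)}(0)$, and this vanishes whenever $q \le \ell n - 1$, which holds because $q = \ell n - \ell' n - k \le \ell n - k \le \ell n - 1$; so all such DOFs vanish. At the vertex $a_1$ (where $k = n$, $F = \{a_1\}$, $\lambda_1 = 1$), the DOF evaluates $b_\ell^{(q)}(1)$ with $q = \ell n - \ell' n - n = (\ell - \ell' - 1)n$, which is one of the orders $0, n, 2n, \dots, (\ell-1)n$ (valid since $0 \le \ell' \le \ell - 1$), and these vanish by the second line of \eqref{eq:bl}. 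For a transversal face, one parametrizes $F$ so that $\lambda_1$ ranges over a subinterval and reduces $\frac{1}{|F|}\int_F b_\ell^{(q)}(\lambda_1)$ to a one-dimensional integral of $b_\ell^{(q)}$ against a polynomial weight in $\lambda_1$; this is where one must use the structure of $A_k$ and the averaging — the point is that on a transversal face the directional derivative may already be tangential, so $d_{T,F,\balpha}$ should be reinterpreted via Lemma \ref{lm:LW-derivative}, namely it suffices that $\frac{1}{|F|}\int_F \nabla^{q} v = 0$, and integrating this tensor of $b_\ell^{(q)}(\lambda_1)$-multiples over a transversal face gives an integral $\int_0^{c} b_\ell^{(q)}(s)\, w(s)\, ds$ that can be reduced, by integration by parts against the polynomial weight, to boundary terms at $\lambda_1 = 0$ and $\lambda_1 = 1$ involving lower-order derivatives $b_\ell^{(q')}(0)$ and $b_\ell^{(q')}(1)$ — which again vanish by \eqref{eq:bl}.

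\textbf{Anticipated obstacle.} The delicate point is the transversal-face case and, more generally, making the constant $c_{F,\balpha}$ and the reduction to a one-dimensional integral precise across all sub-simplex dimensions $k$ simultaneously; one has to confirm that the derivative orders appearing are always $\le \ell n - 1$ (for the $\lambda_1 = 0$ boundary data) or of the form $j n$ with $0 \le j \le \ell - 1$ (for the $\lambda_1 = 1$ data), and that integration by parts on a transversal simplex never produces a derivative order outside the range controlled by \eqref{eq:bl}. I expect the cleanest route is to invoke Lemma \ref{lm:LW-derivative} to replace the task ``all $d_{T,j}(v) = 0$ for $D_T^{(\ell n,n)}$'' by the equivalent tensor-average conditions $\frac{1}{|F|}\int_F \nabla^{q} v = 0$ for all $F \in \mathcal{F}_{T,k}$ and all admissible $q = \ell n - \ell' n - k \ge 0$, and then to prove these by induction on $k$ descending from $n$ (exactly as in the sketch of Lemma \ref{lm:LW-derivative}): the base case $k = n$ uses $b_\ell^{(jn)}(1) = 0$ at vertices in $\{\lambda_1 = 1\}$ and $b_\ell^{(q)}(0) = 0$ at all other vertices, and the inductive step uses Green's lemma to pass from a $k$-dimensional face to its $(k+1)$-codimensional sub-faces, with the tangential directions handled by the hypothesis and the one remaining normal direction producing exactly a lower-order $b_\ell^{(q-1)}$ whose average over the sub-faces is already known to vanish. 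Uniqueness of the monic $b_\ell$ is immediate from the uniqueness clause of Lemma \ref{lm:1d-unisolvence} once the monic normalization fixes the leading coefficient.
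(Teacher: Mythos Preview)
Your proposal is correct and follows essentially the same approach as the paper: existence and uniqueness via Lemma~\ref{lm:1d-unisolvence} with $s=\ell n$, $t=\ell$, $\alpha_j=jn$; reduction of the bubble condition to the face-average conditions $\frac{1}{|F|}\int_F b_\ell^{(q)}(\lambda_1)=0$ via Lemma~\ref{lm:LW-derivative}; and the same case split on whether $a_1\in F$ (your ``$F\subset\{\lambda_1=0\}$'' vs.\ ``transversal'' dichotomy). The only computational difference is in the transversal case $a_1\in F$, $1\le k\le n-1$: instead of your proposed induction on $k$ via Green's lemma, the paper maps $F$ to the reference $(n-k)$-simplex and carries out the $(n-k)$-fold iterated integral in one pass, arriving at the explicit formula
\[
(n-k)!\Big(b_\ell^{((\ell-t-1)n)}(1)-b_\ell^{((\ell-t-1)n)}(0)-\sum_{i=1}^{n-k-1}\tfrac{1}{i!}\,b_\ell^{((\ell-t-1)n+i)}(0)\Big),
\]
each term of which vanishes by \eqref{eq:bl}; your inductive reduction $\int_F b_\ell^{(q)}(\lambda_1)\to\int_{F'} b_\ell^{(q-1)}(\lambda_1)$ over sub-faces $F'$ of codimension $k+1$ works equally well and avoids this explicit formula, the key check being that $q-1=\ell n-\ell'n-(k+1)$ is exactly the derivative order at codimension $k+1$.
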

\begin{proof}
  By Lemma \ref{lm:1d-unisolvence} (unisolvence for single-variable function), we know that there exists a unique polynomial $p(x) \in \mathcal{P}_{\ell(n+1)-1}(\mathbb{R})$ satisfying
$$
    \begin{aligned}
      p^{(i)}(0) &= [x^{\ell(n+1)}]^{(i)}(0),\quad i=0,1, \ldots, \ell n-1,\\
      p^{(j)}(1) &= [x^{\ell(n+1)}]^{(j)}(1),\quad j=0,n,2n, \ldots, (\ell-1)n.
    \end{aligned}
$$
Let $b_\ell(x) := x^{\ell(n+1)}-p(x)$ which satisfies \eqref{eq:bl}. Next, we show that $b_\ell(\lambda_1)$ is a nonconforming bubble of layer $\ell$, which, thanks to Lemma \ref{lm:LW-derivative} (equivalence for vanishing DOF), amounts to showing
\begin{equation} \label{eq:bl-equiv}
    \frac{1}{|F|}\int_F \nabla^{(\ell - t)n-k} b_\ell(\lambda_1) = 0 \quad  \forall F\in \mc F_{T,k},~\forall 1\leq k\leq n,~\forall 0\leq t\leq \ell-1.
\end{equation}
Since $b_\ell$ is single variable, we therefore only need to check the derivatives of $b_\ell(\lambda_1)$ with respect to $\lambda_1$ in \eqref{eq:bl-equiv}. 

{\it Step 1: verification of \eqref{eq:bl-equiv}}. For any $F \in \mc F_{T,n}$ (i.e., $F$ is a vertex), either $\lambda_1 = 1$ ($F = a_1$) or $\lambda_1 = 0$ (other vertex), \eqref{eq:bl} implies that 
$$
b_\ell^{((\ell-t-1)n)}(0) = b_\ell^{((\ell-t-1)n)}(1)=0 \quad \forall 0\leq t\leq \ell-1.
$$
Therefore, \eqref{eq:bl-equiv} holds for $k = n$.

For any $F \in \mathcal{F}_{T,k}$ with $1\leq k \leq n-1$, we consider the following two cases.

\begin{enumerate}
\item Case 1:  $a_1\notin F$. In this case, $\lambda_1|_F = 0$ and \eqref{eq:bl} directly imply 
  $$
   \frac{1}{|F|} \int_F b_\ell^{((\ell-t)n-k)}(\lambda_1) = b_\ell^{((\ell-t)n-k)}(0) = 0 \quad \forall 0\leq t\leq \ell-1.
   $$
\item Case 2: $a_1\in F$. Denote $\hat{T}_{n-k}$ the reference $(n-k)$-simplex. Using the linear transformation from $\hat{T}$ to $F$, where $a_1$ is the mapped from $(1,0,\cdots,0)$, we have that, for any $0 \leq t \leq \ell-1$,
$$
  \begin{aligned}
    & \frac{1}{|F|} \int_F b_\ell^{((\ell-t)n-k)}(\lambda_1) \,\mathrm{d}{\boldsymbol x} = \frac{1}{|\hat{T}_{n-k}|} \int_{\hat{T}_{n-k}} b_\ell^{((\ell-t)n-k)}(\hat{x}_1) \,\mathrm{d}\hat{\boldsymbol x}  \\
    =~ & (n-k)! \int_0^1\int_0^{1-\hat{x}_{n-k}} \cdots \int_0^{1-\sum_{i=2}^{n-k}\hat{x}_i} b_\ell^{((\ell-t)n-k)}(\hat{x}_1) \,\mathrm{d}\hat{\boldsymbol x}  \\
    =~& (n-k)! \int_0^1\int_0^{1-\hat{x}_{n-k}} \cdots \int_0^{1-\sum_{i=3}^{n-k}\hat{x}_i} 
    \Big(b_\ell^{((\ell-t)n-k-1)}(1-\sum_{i=2}^{n-k}\hat{x}_i) \\
     & \qquad\qquad\qquad\qquad\qquad\qquad\quad  - b_\ell^{((\ell-t)n-k-1)}(0)\Big) \,\mathrm{d} \hat{x}_2 \cdots \mathrm{d} \hat{x}_{n-k} \\
    =~& (n-k)! \left(b_\ell^{((\ell-t-1)n)}(1) - b_\ell^{((\ell-t-1)n)}(0)- \sum_{i=1}^{n-k-1} \frac{1}{i!}b_\ell^{((\ell-t-1)n+i)}(0) \right).
  \end{aligned}
  $$
Based on \eqref{eq:bl}, each term in the final step of the equation vanishes, ensuring that \eqref{eq:bl-equiv} is valid for $1 \leq k \leq n-1$.
\end{enumerate}
Combining the above two cases, the existence of single variable nonconforming bubble is shown.

  {\it Step 2: uniqueness}. Thanks to Lemma \ref{lm:LW-derivative} (equivalence for vanishing DOF) and the above calculation, we obtain 
  $$
  \begin{aligned}
&  b_\ell(\lambda_1) \text{ is a nonconforming bubble } \Leftrightarrow \eqref{eq:bl-equiv} \\
\Leftrightarrow & \left\{
\begin{aligned}
b_\ell^{((\ell-t-1)n)}(1) = 0, &\qquad \forall 0 \leq t \leq \ell-1, \\
b_\ell^{((\ell-t)n-k)}(0) = 0, &\quad  \forall 1 \leq k \leq n, \forall 0 \leq t \leq \ell-1,  \\
\begin{aligned}
& b_\ell^{((\ell-t-1)n)}(1) - b_\ell^{((\ell-t-1)n)}(0) \\
&- \sum_{i=1}^{n-k-1} \frac{1}{i!}b_\ell^{((\ell-t-1)n+i)}(0)
\end{aligned}=0, &\quad \forall 1 \leq k \leq n-1, \forall 0 \leq t \leq \ell-1,
\end{aligned}
\right. \\
\Leftrightarrow~ & \eqref{eq:bl}.
\end{aligned}
$$ 
Therefore, the uniqueness of $b_\ell$ (in the monic sense) follows from the uniqueness of $p(x)$, as stated in Lemma \ref{lm:1d-unisolvence} (unisolvence of single-variable function).
\end{proof}

In Corollary \ref{co:integral-PT} (general integral-type representation of $P_T^{(m,n)}$), by selecting the generator as the $b_\ell$ constructed in Theorem \ref{tm:bl}, we obtain the spatial integral-type representation used in the actual proof of unisolvence:
    \begin{equation} \label{eq:PT-bl}
     P_T^{(m,n)} = 
      \mathcal{P}_m(T) + \sum_{\ell =1}^{ \lceil \frac{m}{n} \rceil - 1} \left\langle I_{0,\blam}^\balpha b_\ell(\lambda_1): |\balpha| = m - \ell n \right\rangle.
    \end{equation}
 Before discussing the proof in detail, let us first outline the advantages of this special representation when dealing with the induction argument in $m$.

First, if $m-1$ is not a multiple of $n$, the number of summands in \eqref{eq:PT-bl}, i.e., $\lceil \frac{m}{n}\rceil$, remains the same for both $m-1$ and $m$. In this case, since differentiation is the inverse operation of integration, it follows directly that many first-order derivatives of functions in $P_T^{(m,n)}$ are contained within $P_T^{(m-1,n)}$. This allows for the direct application of the inductive hypothesis (see Case 1 in the proof of Theorem \ref{tm:unisolvence}).

However, when $m-1$ is a multiple of $n$, a new layer of space generated by a nonconforming bubble is introduced in \eqref{eq:PT-bl} from $m-1$ to $m$. Essentially, the nonconforming bubble is obtained by composing $\lambda_1$ to a one-dimensional function $b_\ell$. To handle these terms in the proof of unisolvence, we additionally require a property of the $b_\ell$ stated as follows.

\begin{lemma}[a property of $b_\ell$] \label{lm:bl-prop}
The monic polynomial $b_\ell \in \mathcal{P}_{\ell(n+1)}(\mathbb{R})$ defined in \eqref{eq:bl} has the following property: 
\begin{equation} \label{eq:bl-prop}
\int_0^1 (1-t)^{n-1} b_{\ell}(t) \,\mathrm{d} t \neq 0 \quad \forall n \geq 1, \ell \geq 1.
\end{equation}
\end{lemma}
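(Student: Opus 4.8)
The plan is to obtain an explicit (or sufficiently explicit) handle on $b_\ell$ and to evaluate the functional $L_n(f) := \int_0^1 (1-t)^{n-1} f(t)\,\mathrm{d}t$ on it. Recall that by Theorem~\ref{tm:bl}, $b_\ell = x^{\ell(n+1)} - p$, where $p \in \mathcal{P}_{\ell(n+1)-1}(\mathbb{R})$ is the Hermite-type interpolant of $x^{\ell(n+1)}$ matching the $\ell n$ derivatives $\frac{\mathrm{d}^i}{\mathrm{d}x^i}$ at $0$ for $i = 0, \dots, \ell n - 1$ and the $\ell$ derivatives $\frac{\mathrm{d}^{jn}}{\mathrm{d}x^{jn}}$ at $1$ for $j = 0, \dots, \ell-1$. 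Equivalently, $b_\ell$ is the unique monic polynomial of degree $\ell(n+1)$ with $b_\ell^{(i)}(0) = 0$ for $0 \le i \le \ell n - 1$ and $b_\ell^{(jn)}(1) = 0$ for $0 \le j \le \ell-1$. From the vanishing at $0$ we may write $b_\ell(x) = x^{\ell n} q_\ell(x)$ with $q_\ell \in \mathcal{P}_\ell(\mathbb{R})$ monic; the remaining $\ell$ conditions at $1$ pin down $q_\ell$.

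First I would try to reduce \eqref{eq:bl-prop} to a statement about the sign of $b_\ell$ on $(0,1)$. A natural conjecture is that $b_\ell$ does not change sign on $(0,1)$ — indeed, since $b_\ell$ vanishes to order $\ell n$ at $0$ and is monic, it behaves like $x^{\ell n} > 0$ near $0^+$; if one can show $b_\ell > 0$ on all of $(0,1)$, then $L_n(b_\ell) = \int_0^1 (1-t)^{n-1} b_\ell(t)\,\mathrm{d}t > 0$ since the weight $(1-t)^{n-1}$ is nonnegative and positive on $(0,1)$, and \eqref{eq:bl-prop} follows immediately. To establish positivity on $(0,1)$, I would count zeros: $b_\ell$ has a zero of multiplicity exactly $\ell n$ at $x=0$ (not more, else $\deg q_\ell$ would be forced down and monicity would fail together with the conditions at $1$ — this needs a small argument), leaving at most $\ell$ further zeros in $\mathbb{C}$. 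The conditions at $1$ are $b_\ell^{(jn)}(1) = 0$ for $j = 0, \dots, \ell - 1$; these are conditions on high-order derivatives, and do \emph{not} directly say $b_\ell$ vanishes at $1$ to high order, so one cannot conclude all remaining zeros sit at $1$. Instead, I would integrate: repeatedly applying Rolle/antiderivative arguments to the vanishing of $b_\ell^{(jn)}(1)$, or more cleanly, use the integral representation $b_\ell(\lambda_1)$ being a nonconforming bubble — but actually the cleanest route may be to directly exploit the structure $b_\ell = x^{\ell n} q_\ell$ and show $q_\ell > 0$ on $[0,1]$.

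The main obstacle, as I see it, is controlling the sign/zeros of $q_\ell$ on $(0,1)$ coming from the $\ell$ interpolation conditions at $x=1$ involving only the derivatives of order $0, n, 2n, \dots, (\ell-1)n$. These "sparse" derivative conditions are not a standard Hermite scheme, so the usual zero-counting (which would give $\ell$ zeros clustered at $1$) does not apply verbatim. My fallback plan, if the clean positivity argument resists, is a more computational one: expand $b_\ell$ in a convenient basis — e.g.\ write $b_\ell(t) = \sum_{j=0}^{\ell} c_j\, t^{\ell n}(t-1)^{?}\cdots$ or use the basis $\{x^{\ell n}, x^{\ell n + 1}, \dots, x^{\ell(n+1)}\}$ modulo the conditions — solve the resulting small linear system for the coefficients, and then compute $L_n(b_\ell) = \int_0^1 (1-t)^{n-1} b_\ell(t)\,\mathrm{d}t$ via Beta-function integrals $\int_0^1 (1-t)^{n-1} t^k\,\mathrm{d}t = \frac{(n-1)!\,k!}{(n+k)!}$, arriving at a closed-form expression whose nonvanishing can be checked directly (it should come out to a ratio of factorials with a definite sign). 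A third option, should both fail, is induction on $\ell$: relate $b_{\ell+1}$ to $b_\ell$ via differentiation (note $b_{\ell+1}^{(n+1)}$ has the same interpolation data pattern, up to scaling, as a multiple of $b_\ell$), and propagate the nonvanishing of the integral through the recursion using integration by parts to handle the weight $(1-t)^{n-1}$. I expect the positivity-of-$b_\ell$-on-$(0,1)$ route to be the intended and shortest one, with the zero-multiplicity bookkeeping at $x=0$ and $x=1$ being the only delicate point.
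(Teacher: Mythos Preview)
Your leading route contains a concrete error. You claim that near $0^+$ the function $b_\ell$ ``behaves like $x^{\ell n} > 0$'' because it is monic and vanishes to order $\ell n$ at $0$. But monicity refers to the top coefficient (of $x^{\ell(n+1)}$), not to the lowest nonzero one: with $b_\ell(x) = x^{\ell n} q_\ell(x)$ and $q_\ell$ monic of degree $\ell$, the sign near $0$ is that of $q_\ell(0)$. Already for $\ell = 1$ one has $b_1(x) = x^n(x-1) < 0$ on $(0,1)$. A corrected fixed-sign conjecture $(-1)^\ell b_\ell > 0$ on $(0,1)$ may well hold, but you have no argument for it, and as you yourself note the sparse derivative conditions at $x=1$ resist a direct Rolle-type zero count. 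Your two fallbacks are not developed either; in particular the claim that $b_{\ell+1}^{(n+1)}$ has ``the same interpolation data pattern'' as $b_\ell$ is off --- it vanishes only to order $\ell n - 1$ at $0$, and the conditions it inherits at $1$ sit at orders $n-1, 2n-1, \dots$, not $0, n, 2n, \dots$.

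The paper's proof is short and bypasses sign analysis entirely by reusing Lemma~\ref{lm:1d-unisolvence}. Integrating by parts $n-1$ times gives
\[
\int_0^1 (1-t)^{n-1} b_\ell(t)\,\mathrm{d}t \;=\; (n-1)!\,\Psi_\ell(1),
\qquad
\Psi_\ell(x) := \int_0^x\!\int_0^{t_1}\!\!\cdots\!\int_0^{t_{n-1}} b_\ell(t_n)\,\mathrm{d}t_n\cdots\mathrm{d}t_1,
\]
so $\Psi_\ell \in \mathcal{P}_{\ell(n+1)+n}$ and $\Psi_\ell^{(n)} = b_\ell$. If $\Psi_\ell(1)=0$, then $\Psi_\ell^{(i)}(0)=0$ for $0 \le i \le (\ell+1)n-1$ (from the construction and \eqref{eq:bl}) and $\Psi_\ell^{(jn)}(1)=0$ for $0 \le j \le \ell$ (the case $j=0$ is the assumption; the rest are $b_\ell^{((j-1)n)}(1)=0$). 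Lemma~\ref{lm:1d-unisolvence} with $s=(\ell+1)n$, $t=\ell+1$ then forces $\Psi_\ell\equiv 0$, contradicting $\Psi_\ell^{(n)}=b_\ell\neq 0$. None of your three routes hits on this; the key move you are missing is to take the $n$-fold antiderivative rather than to analyze $b_\ell$ directly.
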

\begin{proof} We first conduct integration by parts $(n-1)$ times to have 
$$
\begin{aligned}
\int_0^1 (1-t)^{n-1} b_{\ell}(t) \,\mathrm{d} t &= (n-1) \int_0^1 (1-t)^{n-2} \int_0^{t} b_{\ell}(t_2) \,\mathrm{d}t_2 \mathrm{d}t \\
&= (n-1)! \int_0^1 \int_0^{t_1} \cdots \int_0^{t_{n-1}} b_\ell(t_{n}) \,\mathrm{d}t_{n}\cdots \mathrm{d}t_1.
\end{aligned}
$$ 
Define 
$$
\Psi_\ell(x) := \int_0^x \int_0^{t_1} \cdots \int_0^{t_{n-1}} b_\ell(t_{n}) \,\mathrm{d}t_{n}\cdots \mathrm{d}t_1.
$$
Since $b_\ell \in \mathcal{P}_{\ell n+\ell}(\mathbb{R})$, it follows that $\Psi_\ell \in \mathcal{P}_{\ell n+\ell+n}(\mathbb{R})$. Assume that \eqref{eq:bl-prop} does not hold, i.e., $\Psi_\ell(1) = 0$. In light of the definition of $b_\ell$ in \eqref{eq:bl} and the fact that $\Psi_\ell^{(n)}(x)= b_\ell(x)$, we easily see that 
$$ 
\begin{aligned}
      \Psi_\ell^{(i)}(0) = 0, \quad &\forall i=0,1, \ldots,\ell n+n-1, \\
      \Psi_\ell^{(j)}(1) = 0, \quad &\forall j=0, n , 2n, \ldots, \ell n.
\end{aligned}
$$
Now, applying Lemma \ref{lm:1d-unisolvence} (unisolvence for single-variable function) leads to $\Psi(x) = 0$, which contradicts with $\Psi^{(n)}(x) = b_\ell(x) \neq 0$. Therefore, $\Psi_\ell(1) \neq 0$, which means that \eqref{eq:bl-prop} holds.
\end{proof}

\subsection{Proof of unisolvence} Building upon the previous preparations, we will conclude this section with the proof of unisolvence.

\begin{theorem}[unisolvence] \label{tm:unisolvence}
For any $m,n \geq 1$, $D_T^{(m,n)}$ is $P_T^{(m,n)}$-unisolvent.
  \end{theorem}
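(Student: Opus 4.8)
The plan is to prove unisolvence by induction on $m$ for arbitrary fixed $n$, exploiting the matched multi-layer structures of $D_T^{(m,n)}$ and $P_T^{(m,n)}$. Since Lemma~\ref{lm:dim-PT} gives $\dim P_T^{(m,n)} = \#D_T^{(m,n)}$, it suffices to show that any $v \in P_T^{(m,n)}$ with all degrees of freedom in $D_T^{(m,n)}$ vanishing must be identically zero. The base cases are $m \leq n$, which are precisely the MWX elements (only the $0$-th layer is present, $P_T^{(m,n)} = \mathcal{P}_m(T)$), so unisolvence there is \cite[Lemma~2.2]{wang2013minimal}; the case $n=1$ reduces to standard $C^{m-1}$ Hermite interpolation as noted in the Remark after \eqref{eq:shape}.

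For the inductive step, assume unisolvence holds for $(m-1,n)$ and take $v \in P_T^{(m,n)}$ vanishing at all of $D_T^{(m,n)}$. The key idea is to differentiate: by Lemma~\ref{lm:LW-derivative}, vanishing of $D_T^{(m,n)}$ is equivalent to $\frac{1}{|F|}\int_F \nabla^{m-\ell n-k} v = 0$ for all admissible $F, k, \ell$, and taking one derivative $\partial_j v$ one checks these relations descend exactly to the relations characterizing vanishing of $D_T^{(m-1,n)}$ applied to $\partial_j v$ — this is the discrete analogue of ``$H^m \to H^{m-1}$ by differentiation'' advertised in the introduction. So if I can show $\partial_j v \in P_T^{(m-1,n)}$ for every $j$, the inductive hypothesis forces $\partial_j v = 0$ for all $j$, hence $v$ is a constant; then the layer-$\ell=0$, $k=n$ vertex degrees of freedom (function values at vertices) give $v=0$. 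Thus the crux is: \emph{show $\partial_j v \in P_T^{(m-1,n)}$ for all $j$}, and here the special integral-type representation \eqref{eq:PT-bl} with generators $b_\ell$ is used. Write $v = v_0 + \sum_{\ell=1}^{\lceil m/n\rceil-1} I_{0,\blam}^{\balpha_\ell} b_\ell(\lambda_1)$-type terms with $v_0 \in \mathcal{P}_m(T)$. Differentiating, $\partial_j v_0 \in \mathcal{P}_{m-1}(T) \subset P_T^{(m-1,n)}$, and because $\partial_j$ is (up to the chain-rule factor $\partial_j\lambda_1$, a constant) the inverse of one of the nested integrations $I_{0,\lambda_i}$, the derivative of $I_{0,\blam}^{\balpha_\ell} b_\ell(\lambda_1)$ is either again of the form $I_{0,\blam}^{\balpha'_\ell} b_\ell(\lambda_1)$ with $|\balpha'_\ell| = (m-1) - \ell n$ (when $\alpha_{\ell,j} \geq 1$), which lies in the corresponding layer of $P_T^{(m-1,n)}$, or it is $b_\ell(\lambda_1)$ composed with fewer integrals — and this must be controlled.

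The main obstacle — and the reason the paper splits the proof into two cases — is exactly when $m-1$ \emph{is} a multiple of $n$, so that $\lceil m/n \rceil = \lceil (m-1)/n \rceil + 1$: the top layer $\ell^* = \lceil m/n\rceil - 1$ present in $P_T^{(m,n)}$ is \emph{absent} from $P_T^{(m-1,n)}$, so a derivative of a top-layer term need not lie in $P_T^{(m-1,n)}$. Here one uses that $b_{\ell^*}(\lambda_1)$ is a genuine nonconforming bubble of layer $\ell^*$ (Theorem~\ref{tm:bl}): for such $v$ the top-layer degrees of freedom $D_{T,\ell^*}^{(m,n)}$ — which, since $m - \ell^* n \leq n$, are the ``layer-$0$-type'' MWX degrees of freedom attached to that layer — must themselves vanish, and combined with Lemma~\ref{lm:bl-prop} (the nonvanishing of $\int_0^1 (1-t)^{n-1} b_{\ell^*}(t)\,dt$), this pins down the coefficient of the bubble term in $v$ to be zero, reducing $v$ to a function in the sum of the lower layers, i.e.\ effectively in $P_T^{(m-1,n)}$-compatible form, after which the Case-1 argument applies. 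I would organize the write-up as: (i) reduce to showing $v \equiv 0$; (ii) Case 1, $n \nmid (m-1)$: differentiate, use \eqref{eq:PT-bl} to get $\partial_j v \in P_T^{(m-1,n)}$, apply the inductive hypothesis and the vertex DOF; (iii) Case 2, $n \mid (m-1)$: first use the nonconforming-bubble property and Lemma~\ref{lm:bl-prop} to kill the top-layer coefficient, then fall back to Case 1. I expect Case~2's bookkeeping — tracking how the multi-index $\balpha$ and the repeated integrations interact with $\partial_j$ and with restriction to subsimplices — to be the delicate part, but all the needed ingredients (Lemmas~\ref{lm:LW-derivative}, \ref{lm:1d-unisolvence}, \ref{lm:bl-prop}, Theorem~\ref{tm:bl}, and representation \eqref{eq:PT-bl}) are already in place.
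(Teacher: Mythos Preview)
Your overall strategy---induction on $m$, differentiating to land in the $(m-1,n)$ element, with separate handling when a new layer appears---is exactly the paper's. But there is a real gap in your Case~1. You claim $\partial_j v \in P_T^{(m-1,n)}$ for \emph{every} direction $j$; this fails for the distinguished direction $\lambda_1$ that appears in the shape function space. Concretely, take $n=2$, $m=4$ (so $\ell_0=1$, $r=2$, Case~1). With $b_1(x)=x^3-x^2$ one has $I_{0,\blam}^{(0,2)}b_1(\lambda_1)=\tfrac12\lambda_2^2(\lambda_1^3-\lambda_1^2)\in P_T^{(4,2)}$, but
\[
\partial_{\lambda_1}\bigl(\tfrac12\lambda_2^2(\lambda_1^3-\lambda_1^2)\bigr)=\tfrac12\lambda_2^2(3\lambda_1^2-2\lambda_1),
\]
whose degree-four part $\tfrac32\lambda_1^2\lambda_2^2$ does not lie in $P_T^{(3,2)}=\mathcal P_3+\lambda_1^3\mathcal P_1$. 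The mechanism is that when $\alpha_1=0$, differentiating in $\lambda_1$ produces $b_\ell'(\lambda_1)$ times a monomial $\lambda_2^{\alpha_2}\cdots\lambda_n^{\alpha_n}$ of full degree $m-\ell n$, and this fits into no layer of $P_T^{(m-1,n)}$. The paper's remedy is a two-stage argument: first apply the induction only to $\partial_{\lambda_i}v$ for $i\neq 1$ (these \emph{do} land in $P_T^{(m-1,n)}$), conclude that $v=p(\lambda_1)$ depends on $\lambda_1$ alone, and then observe that for a univariate $p\in\mathcal P_{m+\ell_0}$ one has $p'(\lambda_1)\in P_T^{(m-1,n)}$ automatically.

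Your Case~2 outline (``kill the top-layer coefficients first, then fall back to Case~1'') is also too coarse, and in the wrong order. The top-layer degrees of freedom here are the face averages $\frac{1}{|F|}\int_F v$ over $(n-1)$-faces; they see the lower-layer part $\tilde v$ as well as $c_1,\ldots,c_n$, so you cannot isolate the $c_i$'s directly. The paper instead first differentiates in $\lambda_i$ ($i\neq1$): since $\partial_{\lambda_i}$ of the top layer equals $c_i\,b_{\ell_0}(\lambda_1)$, which is a nonconforming bubble and hence invisible to $D_T^{(m-1,n)}$, the inductive hypothesis applies to $\partial_{\lambda_i}\tilde v$ and forces $\tilde v=\tilde p(\lambda_1)$. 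Only then, by symmetry of $\tilde p(\lambda_1)$ across the faces $F_0,F_2,\ldots,F_n$, do the face averages combine with Lemma~\ref{lm:bl-prop} to give $c_2=\cdots=c_n=0$; after that $\tilde v=0$ and finally $c_1=0$. One minor correction: the degree of freedom that kills the final constant in Case~1 is $\frac{1}{|F|}\int_F v$ for $F\in\mathcal F_{T,r}$ (layer $\ell_0$, $|\balpha|=0$), not vertex function values---those are present only when $n\mid m$.
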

  \begin{proof}
As shown in Lemma \ref{lm:dim-PT}, dimension of $P_T^{(m,n)}$ matches the number of local degrees of freedom, it suffices to show that $v = 0$ if all the degrees of freedom vanish.
    
For any $n \geq 1$, we conduct the induction argument on $m$. Clearly, the case in which $m = 1$ is standard (i.e., Crouzeix-Raviart elements). Suppose the statement is true for all the case less than $m$, we will prove it for the case $m$. We write $m = \ell_0 n + r$ where $\ell_0 := \lceil \frac{m}{n}\rceil - 1$ and $1\leq r\leq n$.
Consider $v_i:=\partial_{\lambda_i}v~(1 \leq i \leq n)$. Lemma \ref{lm:LW-derivative} (equivalence for vanishing DOF) implies that 
\begin{equation} \label{eq:dof-induction}
\begin{aligned}
& d_{T,j}(v) = 0 \quad \forall d_{T,j} \in D_T^{(m,n)} \\
\Leftrightarrow~ & \frac{1}{|F|}\int_F \nabla^{m-\ell n-k}v = 0 \quad \forall F\in \mc F_{T,k}, \forall 0\leq k \leq n, m-\ell n - k \geq 0,  \\
 \Rightarrow~ & \frac{1}{|F|} \int_F \nabla^{m-1-\ell n-k}v_i = 0 \quad \forall F\in \mc F_{T,k}, \forall 0\leq k \leq n, m -1 -\ell n - k\geq 0, \\
\Leftrightarrow ~& d_{T,j}(v_i) = 0 \quad \forall d_{T,j} \in D_T^{(m-1,n)}.
\end{aligned}
\end{equation}

{\it Case 1: $r>1$}. In this case, $m-1 = \ell_0 n + (r-1)$, where $1 \leq r-1 \leq n$. That is, $\lceil \frac{m-1}{n}\rceil - 1 = \ell_0$. 
First, for $i \neq 1$, in light of the integral representation of $P_T^{(m,n)}$ in \eqref{eq:PT-bl}, we have for any $1\leq \ell \leq \ell_0$ and $|\balpha| = m - \ell n$,
     $$ 
     \partial_{\lambda_i} I_{0,\blam}^{\balpha} b_\ell(\lambda_1) =\left\{
     \begin{aligned}
      I_{0,\blam}^{\balpha - \boldsymbol{e}_i} b_\ell(\lambda_1), &\quad \text{ if } \alpha_i \geq 1,\\
      0, &\quad \text{ if } \alpha_i = 0,
     \end{aligned}\right.   
     \in \left\langle I_{0,\blam}^\bbeta b_\ell(\lambda_1): |\bbeta| = m -1 - \ell n \right\rangle. 
     $$
     where $\boldsymbol{e}_i$ is the unit vector in $i$-th entry. This implies that $v_i\in P_T^{(m-1,n)}$, and by \eqref{eq:dof-induction} and induction hypothesis, we know that $v_i=0~(i=2,3,\ldots,n)$, whence $v$ only depends on $\lambda_1$. 
     
     In light of \eqref{eq:shape}, there exists $p \in \mathcal{P}_{m+\ell_0}(\mathbb{R})$ such that $v = p(\lambda_1)$. So $\partial_{\lambda_1}v = p'(\lambda_1)\in P_T^{(m-1,n)}$. By induction hypothesis, $v_1=0$, thus $v$ is a constant. Since $\int_F v=0$ for $F\in \mc F_{T,r}$, 
     we get $v=0$.
  
  {\it Case 2: $r = 1$}. In this case, $m - 1 = (\ell_0-1) n + n$. From \eqref{eq:PT-bl}, we recall that  
     $$ 
     \begin{aligned}
     P_T^{(m-1,n)} = & \mc P_{\ell_0 n}(T) + \sum_{\ell=1}^{\ell_0-1} \left\langle I_{0,\blam}^\balpha b_\ell(\lambda_1): |\balpha| = \ell_0n - \ell n \right\rangle, \\
     P_T^{(m,n)} = &\underbrace{\mc P_{\ell_0 n+1}(T) + \sum_{\ell=1}^{\ell_0-1} \left\langle I_{0,\blam}^\balpha b_\ell(\lambda_1): |\balpha| = \ell_0n - \ell n + 1 \right\rangle}_{:= \tilde{P}_T^{(\ell_0n+1,n)}} \\
     &\qquad \qquad\quad + \left\langle \int_0^{\lambda_1} b_{\ell_0}(t)\,\mathrm{d}t,  \lambda_2 b_{\ell_0}(\lambda_1), \cdots, \lambda_n b_{\ell_0}(\lambda_1) \right\rangle.
     \end{aligned}
     $$
     Therefore, there exist $\tilde{v} \in \tilde{P}_T^{(\ell_0n+1,n)}$ and coefficients $c_i~(1\leq i \leq n)$, such that 
     \begin{equation} \label{eq:unisolvence-v}
     v = \tilde{v} + c_1 \int_0^{\lambda_1} b_{\ell_0}(t)\,\mathrm{d}t + c_2 \lambda_2 b_{\ell_0}(\lambda_1) + \cdots + c_n \lambda_n b_{\ell_0}(\lambda_1).
     \end{equation}  
     The proof in Case 2 is then proceed in four steps.
     
     {\it Step 2-1: $\tilde{v}$ only depends on $\lambda_1$.} We consider $v_i := \partial_{\lambda_i}v$ for $2\leq i \leq n$, which yields
     $$ 
     0 = d_{T,j}(v_i) = d_{T,j}(\partial_{\lambda_i}\tilde{v} + c_i b_{\ell_0}(\lambda_1)) = d_{T,j}(\partial_{\lambda_i}\tilde{v}) \quad \forall d_{T,j} \in D_T^{(m-1,n)} = D_T^{(\ell_0n, n)}.
     $$ 
     Here, in the last step, we utilize the fact that $b_{\ell_0}(\lambda_1)$ is a nonconforming bubble of layer $\ell_0$ (see Theorem \ref{tm:bl}). Now, since $\tilde{v} \in \tilde{P}_T^{(\ell_0n+1,n)}$, we see that $\partial_{\lambda_i} \tilde{v} \in P_T^{(m-1,n)}$ by using a similar argument as Case 1 above. Then, the induction hypothesis yields $\partial_{\lambda_i} \tilde{v} = 0$ for $i = 2, \ldots, n$, whence $\tilde{v}$ only depends on $\lambda_1$. We could therefore write $\tilde{v} = \tilde{p}(\lambda_1)$ where $\tilde{p} \in \mathcal{P}_{\ell_0n+\ell_0}(\mathbb{R})$.
            
{\it Step 2-2: $c_i = 0$ for $i=2,\ldots,n$.} At this point, for any $(n-1)$-dimensional face $F \in \mathcal{F}_{T,1}$, we have $\frac{1}{|F|}\int_F v = 0$. For any $0 \leq j \leq n$, let $F_j$ denote the $(n-1)$-dimensional face that does not contain $a_j$. By symmetry, we have
\begin{equation}
\begin{aligned}
D_0 &:= \frac{1}{|F_j|}\int_{F_j} \tilde{v} = \frac{1}{|F_j|}\int_{F_j} \tilde{p}(\lambda_1) \\
D_1 &:= \frac{1}{|F_j|}\int_{F_j} \int_0^{\lambda_1} b_{\ell_0}(t)\,\mathrm{d}t 
\end{aligned}\quad \forall 0 \leq j \neq 1 \leq n,
\end{equation}
and
\begin{equation}
B := \frac{1}{|F_j|}\int_{F_j} \lambda_i b_{\ell_0}(\lambda_1) \quad \forall 0 \leq j\neq1 \leq n, 2\leq i\neq j \leq n.
\end{equation}
First, we check $ \frac{1}{|F_0|}\int_{F_0}v = 0 $, yielding $ D_0 + c_1D_1 + B \sum_{j=2}^n c_j = 0 $. Next, for $i = 2, \ldots, n$, we check $ \frac{1}{|F_i|} \int_{F_i} v = 0 $ and obtain $ D_0 + c_1D_1 + B \sum_{j=2, j\neq i}^n c_j = 0 $. Hence, we have $Bc_i = 0$ for $i = 2, \ldots, n$. Thanks to Lemma \ref{lm:bl-prop} (a property of $b_{\ell}$) and the standard linear transformation to the $(n-1)$-dimensional reference element $\hat{T}_{n-1}$, we have 
$$
\begin{aligned}
B &= (n-1)! \int_{\hat{T}_{n-1}} \hat{x}_2 b_{\ell_0}(\hat{x}_1) \,\mathrm{d}\hat{x}_1 \cdots \mathrm{d}\hat{x}_{n-1} \\
&= (n-1)(n-2) \int_{0}^1 b_{\ell_0}(\hat{x}_1) \int_{0}^{1-\hat{x}_1} \hat{x}_2 (1-\hat{x}_1 - \hat{x}_2)^{n-3} \,\mathrm{d}\hat{x}_2 \mathrm{d}\hat{x}_1 \\ 
&= \int_0^1 (1-\hat{x}_1)^{n-1}b_{\ell_0}(\hat{x}_1) \,\mathrm{d}\hat{x}_1 \neq 0,
\end{aligned}
$$ 
whence $c_i = 0$, $\forall 2 \leq i \leq n$.

{\it Step 2-3: $\tilde{v} = 0$.} Now, we have $v = \tilde{v} + c_1 \int_0^{\lambda_1} b_{\ell_0}(t) \,\mathrm{d}t = \tilde{p}(\lambda_1) + c_1 \int_0^{\lambda_1} b_{\ell_0}(t) \,\mathrm{d}t$. Then, applying \eqref{eq:dof-induction} to $v_1 :=\partial_{\lambda_1} v$, we have
$$ 
0 = d_{T,j}(v_1) = d_{T,j}(\tilde{p}'(\lambda_1) + c_1 b_{\ell_0}(\lambda_1)) = d_{T,j}(\tilde{p}'(\lambda_1))
\quad \forall d_{T,j} \in D_T^{(m-1,n)} = D_T^{(\ell_0n, n)}.
$$ 
Again, $\tilde{p}' \in \mathcal{P}_{\ell_0n+\ell_0-1}(\mathbb{R})$ so that $\tilde{p}' (\lambda_1) \in P_T^{(\ell_0n, n)} = P_T^{(m-1,n)}$. The induction hypothesis then yields $\tilde{p}'(\lambda_1) = 0$, whence $\tilde{v}$ is a constant $\tilde{c}$. Then, by checking $ \frac{1}{|F_1|}\int_{F_1}v = 0 $ and noting that $ \lambda_1|_{F_1} = 0 $, we obtain $ \tilde{c} = 0 $, which implies $ \tilde{v} = 0 $.

{\it Step 2-4: $c_1 = 0$}. Now go back to {\it Step 2-2}, where we have shown that $D_0 = 0$ and $c_i = 0~(2 \leq i \leq n)$. Therefore, $c_1D_1 = 0$. Thanks again to Lemma \ref{lm:bl-prop} and the standard linear transformation to the $(n-1)$-dimensional reference element $\hat{T}_{n-1}$, we have 
$$ 
\begin{aligned}
D_1 &= (n-1)! \int_{\hat{T}_{n-1}} \int_0^{\hat{x}_1} b_{\ell_0}(t)\,\mathrm{d}t \mathrm{d} \hat{x}_1 \cdots \mathrm{d}\hat{x}_{n-1} \\
&= (n-1) \int_0^1 (1-\hat{x}_1)^{n-2}\int_0^{\hat{x}_1} b_{\ell_0}(t)\,\mathrm{d}t \mathrm{d} \hat{x}_1 \\
&= \int_0^1 (1-t)^{n-1} b_{\ell_0}(t) \,\mathrm{d}t   \neq 0,
\end{aligned}
$$ 
whence $c_1 = 0$.

Combining {\it Step 2-1} to {\it Step 2-4}, we obtain that $v=0$ for {\it Case 2}. This completes the proof.
  \end{proof}

\section{Application to $2m$-th order elliptic PDEs} \label{sec:2m}

As a direct application of the proposed finite elements, this section presents the nonconforming finite element method and theoretical analysis for solving the $m$-harmonic problem \eqref{eq:m-harmonic}. The primary analytical approach employed here is well-established (c.f. \cite{wang2013minimal, wu2019nonconforming}), so we will not repeat the detailed proofs. Instead, we provide an overview of the main steps and emphasize the key points in the process.

\subsection{Finite element space}
We define the spaces $V_h^{(m, n)}$ and $V_{h0}^{(m,n)}$ as follows: 
\begin{enumerate}
\item $V_h^{(m,n)}$ consists of all functions $v_h|_T \in P_{T}^{(m,n)}$, such that any $0 \leq \ell \leq \lceil \frac{m}{n} \rceil -1$, any
$1 \leq k \leq \min\{m - \ell n, n\}$, any $(n-k)$-dimensional sub-simplex $F$ of any $T\in \mathcal{T}_h$ and any
$\balpha \in A_k$ with $|\balpha|=m - \ell n-k$, $d_{T,F,\balpha}(v_h)$ is
continuous through $F$. 

\item $V_{h0}^{(m,n)} \subset V_h^{(m,n)}$ such that for any $v_h
\in V_{h0}^{(m,n)}$, $d_{T,F,\balpha}(v_h) = 0$ if the $(n-k)$-dimensional sub-simplex
$F\subset \partial \Omega$.
\end{enumerate}

{\it Approximation property}. Based on Theorem \ref{tm:unisolvence} (unisolvence), we define the interpolation operator $\Pi_T: H^{m}(T) \to P_T^{(m,n)}$ by 
$$
\Pi_T v = \sum_{i=1}^J p_i d_{T,i}(v) \quad \forall v\in H^{m}(T),
$$
where $p_i \in P_T^{(m,n)}$ is the nodal basis function that satisfies $d_{T,j}(p_i) = \delta_{ij}$, and $\delta_{ij}$ is the Kronecker delta. 
The standard interpolation theory (cf. \cite{ciarlet1978finite, brenner2007mathematical}) implies that, for $s \in [0,1]$ and any integer $0\leq k \leq m$,  
\begin{equation} \label{eq:interpolation-local}
|v - \Pi_T v|_{k,T} \lesssim h_T^{m+s-k}|v|_{m+s,T} \quad \forall v\in H^{m+s}(T).
\end{equation}

The global interpolation operator $\Pi_h$ on $H^{m}(\Omega)$ is
defined as $(\Pi_h v)|_T := \Pi_T(v|_T)$,  for all $T\in \mathcal{T}_h, v \in H^{m}(\Omega)$.
By the above definition, we immediately have $\Pi_hv \in V_h^{(m,n)}$ for any $v\in H^{m}(\Omega)$ and $\Pi_h v \in V_{h0}^{(m,n)}$ for any $v\in H_0^{m}(\Omega)$. The approximate property of $V_h^{(m,n)}$ and $V_{h0}^{(m,n)}$ then follows directly from \eqref{eq:interpolation-local}, whose proof follows the same argument in
\cite[Theorem 2.1]{wang2013minimal}.

\begin{theorem}[approximability] \label{tm:interpolation-global}
For $s \in [0,1]$, it holds that 
\begin{equation} \label{eq:interpolation-global}
\|v - \Pi_h v\|_{m,h} \lesssim h^{s}|v|_{m+s,\Omega} \quad \forall
v\in H^{m+s}(\Omega),
\end{equation}
and for any $v \in H^{m}(\Omega)$, $\lim_{h\to 0}\|v - \Pi_h v\|_{m,h} = 0$.
\end{theorem}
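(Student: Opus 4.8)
The plan is to obtain \eqref{eq:interpolation-global} by summing the local estimate \eqref{eq:interpolation-local} over $\mathcal{T}_h$, and then to deduce $\|v-\Pi_h v\|_{m,h}\to 0$ for merely $v\in H^m(\Omega)$ from a density argument combined with the uniform $H^m$-stability of $\Pi_h$. This mirrors the argument of \cite[Theorem 2.1]{wang2013minimal}, so I would only need to lay out the structure.

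\textbf{Step 1 (global estimate).} For $v\in H^{m+s}(\Omega)$ with $s\in[0,1]$ I would expand
\[
\|v-\Pi_h v\|_{m,h}^2=\sum_{T\in\mathcal{T}_h}\|v-\Pi_T v\|_{m,T}^2=\sum_{T\in\mathcal{T}_h}\sum_{k=0}^m|v-\Pi_T v|_{k,T}^2,
\]
and apply \eqref{eq:interpolation-local} termwise to bound this by $\sum_T\sum_{k=0}^m h_T^{2(m+s-k)}|v|_{m+s,T}^2$. Writing $h_T^{2(m+s-k)}=h_T^{2s}\,h_T^{2(m-k)}$ and using $0<h_T\le h\le\operatorname{diam}(\Omega)$ together with $m-k\ge 0$, every factor $h_T^{2(m-k)}$ is bounded by a constant depending only on $\Omega$ and $m$; hence the sum is $\lesssim h^{2s}\sum_T|v|_{m+s,T}^2$. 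For integer $s$ the last sum equals $|v|_{m+s,\Omega}^2$, while for fractional $s\in(0,1)$ one has $\sum_T|v|_{m+s,T}^2\le|v|_{m+s,\Omega}^2$ because the Sobolev--Slobodeckij seminorm over $\Omega$ additionally contains the nonnegative cross-element double integrals; taking square roots gives \eqref{eq:interpolation-global}.

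\textbf{Step 2 (convergence for $v\in H^m(\Omega)$).} Here I would first record the local stability bound that comes from \eqref{eq:interpolation-local} at $s=0$: summing over $k$ as in Step 1 gives $\|w-\Pi_T w\|_{m,T}\lesssim\|w\|_{m,T}$ for every $w\in H^m(T)$, and therefore $\|\Pi_h w\|_{m,h}\lesssim\|w\|_{m,h}=\|w\|_{m,\Omega}$ for every $w\in H^m(\Omega)$, with a constant independent of $h$. Then, fixing $\varepsilon>0$ and using the density of $H^{m+1}(\Omega)$ (equivalently $C^\infty(\overline{\Omega})$, since $\Omega$ is a bounded polyhedron) in $H^m(\Omega)$, I would pick $w\in H^{m+1}(\Omega)$ with $\|v-w\|_{m,\Omega}<\varepsilon$ and split, using linearity of $\Pi_h$,
\[
\|v-\Pi_h v\|_{m,h}\le\|v-w\|_{m,\Omega}+\|w-\Pi_h w\|_{m,h}+\|\Pi_h(v-w)\|_{m,h},
\]
where the first term is $<\varepsilon$ since $v-w\in H^m(\Omega)$, the third term is $\lesssim\|v-w\|_{m,\Omega}<C\varepsilon$ by the stability bound, and the middle term is $\lesssim h\,|w|_{m+1,\Omega}\to 0$ as $h\to 0$ by \eqref{eq:interpolation-global} with $s=1$ applied to the fixed function $w$. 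Hence $\limsup_{h\to 0}\|v-\Pi_h v\|_{m,h}\le(1+C)\varepsilon$, and letting $\varepsilon\to 0$ finishes the proof.

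\textbf{Main obstacle.} Nothing here is deep; the only points needing care are that every constant is genuinely $h$-independent — ensured by shape-regularity, already used inside \eqref{eq:interpolation-local}, and by the elementary bound $h_T\le h\le\operatorname{diam}(\Omega)$ — and the fractional-order bookkeeping $\sum_T|v|_{m+s,T}^2\le|v|_{m+s,\Omega}^2$, which should be invoked explicitly from the definition of the Slobodeckij seminorm rather than glossed over. The uniform $H^m$-stability of $\Pi_h$ used in Step 2 is not an extra ingredient: it is exactly the $s=0$, summed-in-$k$ case of \eqref{eq:interpolation-local}.
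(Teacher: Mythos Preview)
Your proof is correct and is exactly the approach the paper has in mind: the paper does not spell out a proof but defers to \cite[Theorem 2.1]{wang2013minimal}, and your two steps (summing \eqref{eq:interpolation-local} over $\mathcal{T}_h$ for \eqref{eq:interpolation-global}, then a density argument using the $s=0$ stability bound for the $H^m$ limit) are precisely that argument.
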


{\it Weak continuity}. Let $1 \leq k \leq n$ and $F$ be an
$(n-k)$-dimensional sub-simplex of $T \in \mathcal{T}_h$. Thanks to Lemma \ref{lm:LW-derivative} (equivalence for vanishing DOF), for any $v_h \in V_h^{(m,n)}$ and any $T' \in \mathcal{T}_h$ with $F \subset T'$, we immediately have 
$$
\int_F \partial^{\balpha}(v_h|_{T}) = \int_F \partial^{\balpha}(v_h|_{T'}) \qquad \forall |\balpha| = m - \ell n - k, 1 \leq \ell \leq \lceil \frac{m}{n} \rceil -1.
$$ 
Further, if $F \subset \partial \Omega$, then for any $v_h \in V_{h0}^{(m,n)}$, it holds that $\int_F \partial^{\balpha}(v_h|_T) = 0$, for all $|\balpha| = m - \ell n - k, 1 \leq \ell \leq \lceil \frac{m}{n} \rceil -1$. Therefore, the weak continuity and weak zero-boundary conditions (see Definition \ref{df:weak-continuity}) hold for the proposed finite element spaces. Using the method outlined in \cite{wang2001necessity}, this condition can lead to the following lemma. The key technique involves averaging the degrees of freedom in the $H^1$-Lagrange finite element space, as detailed in the proof of \cite[Lemma 3.1]{wang2013minimal}.

\begin{lemma}[$H^1$ weak approximation] \label{lm:H1-weak-approximation}
For any $v_h \in V_h^{(m,n)}$ and $|\balpha| < m$, there exists a piecewise polynomial $v_{\balpha} \in H^1(\Omega)$
such that  
$$
|\partial_h^\balpha v_h - v_\balpha|_{j,h} \lesssim h^{m-|\balpha|-j}
|v_h|_{m,h} \qquad 0 \leq j \leq m-|\balpha|,
$$
Further, $v_{\balpha}$ can be chosen in $H_0^1(\Omega)$ when $v_h \in V_{h0}^{(m,n)}$.
\end{lemma}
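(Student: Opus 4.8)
\textbf{Proof proposal for Lemma \ref{lm:H1-weak-approximation}.}

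The plan is to reduce everything to the case $|\balpha| = m-1$ and then construct, for each such $\balpha$, a single $H^1$-conforming Lagrange-type interpolant of $\partial_h^\balpha v_h$ whose degrees of freedom are obtained by \emph{averaging} the (possibly) multivalued traces of $\partial_h^\balpha v_h$ across faces and edges of $\mathcal{T}_h$. Concretely, for $|\balpha| = m-1$ the weak-continuity statement established just above this lemma (coming from Lemma \ref{lm:LW-derivative} with $\ell = 0$, $k$ ranging over $1,\dots,n$) tells us that for every sub-simplex $F \in \mathcal{F}_{T,k}$ the quantity $\frac{1}{|F|}\int_F \partial^\balpha (v_h|_T)$ is the \emph{same} for all elements $T$ containing $F$; in particular $\partial_h^\balpha v_h$ has single-valued integral averages over all sub-simplices of all dimensions. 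First I would fix a Lagrange finite element space $L_h \subset H^1(\Omega)$ of fixed degree (say linear, or of degree high enough that its nodal functionals can be taken as integral averages over sub-simplices, as in \cite{wang2013minimal}), and define $v_\balpha \in L_h$ by assigning to each nodal functional the common averaged value of $\partial_h^\balpha v_h$; single-valuedness makes this well defined and places $v_\balpha$ in $H^1(\Omega)$, and in $H^1_0(\Omega)$ when $v_h \in V_{h0}^{(m,n)}$ by the weak zero-boundary condition. The estimate $|\partial_h^\balpha v_h - v_\balpha|_{j,h} \lesssim h^{1-j}|\partial_h^\balpha v_h|_{1,h} \lesssim h^{m-|\balpha|-j}|v_h|_{m,h}$ for $j = 0,1$ then follows elementwise from the Bramble--Hilbert lemma, a scaling/inverse-estimate argument on each $T$, and the fact that on a reference element the map from a polynomial of bounded degree to the difference between it and the average-interpolant of its own derivative is bounded; this is exactly the computation carried out in \cite[Lemma 3.1]{wang2013minimal}, so I would cite it rather than reproduce it.

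For general $|\balpha| < m-1$ I would proceed by downward induction on $|\balpha|$. Having produced $v_\bbeta \in H^1(\Omega)$ for all $|\bbeta| = |\balpha|+1$ with the stated bounds, note that $\partial_h^\balpha v_h \in V_h^{(m,n)}$-type piecewise polynomials whose first derivatives $\partial_h^{\balpha + \be_i} v_h$ are each close in the broken $H^1$-seminorm to the genuine $H^1$ function $v_{\balpha+\be_i}$. One then builds $v_\balpha$ again by averaging the multivalued sub-simplex traces of $\partial_h^\balpha v_h$ into a fixed Lagrange space; the key point is that the resulting gradient $\nabla v_\balpha$ is comparable to the vector $(v_{\balpha+\be_1},\dots,v_{\balpha+\be_n})$ up to $O(h^{m-|\balpha|-1})$ in $L^2$, so that $v_\balpha$ inherits the higher-order approximation through the telescoping of the inductive estimates, giving $|\partial_h^\balpha v_h - v_\balpha|_{j,h} \lesssim h^{m-|\balpha|-j}|v_h|_{m,h}$ for $0 \le j \le m - |\balpha|$. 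The boundary version is preserved at each induction step because the averaging of a quantity that vanishes on $\partial\Omega$ vanishes there too.

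The main obstacle I anticipate is \emph{not} the scaling estimates — those are routine Bramble--Hilbert/inverse-inequality bookkeeping — but rather verifying that the averaged nodal values genuinely define a function in $H^1$, i.e.\ that the chosen Lagrange space has enough continuity and that the sub-simplex averages of $\partial_h^\balpha v_h$ that we read off are consistent across \emph{all} faces simultaneously (vertices, edges, \dots, facets), not just facets. This is precisely where the full strength of Lemma \ref{lm:LW-derivative} — giving vanishing/single-valued averages of $\nabla^{m-k}v$ on every $k$-codimensional sub-simplex, for the layer $\ell = 0$ — is used, and it is the analogue of \cite{wang2001necessity}'s necessity argument. Since the design of $D_T^{(m,n)}$ was made specifically so that the $\ell=0$ layer reproduces the MWX degrees of freedom, this consistency holds verbatim as in \cite{wang2013minimal}, and I would organize the write-up to isolate this verification as the one genuinely new check, deferring all norm estimates to citations.
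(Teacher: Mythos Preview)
Your overall strategy --- average $\partial_h^\balpha v_h$ into an $H^1$-Lagrange space and cite \cite[Lemma~3.1]{wang2013minimal} --- is exactly what the paper does; it gives no proof beyond that citation. Two points of confusion in your write-up, however. First, the claim that for $|\balpha|=m-1$ the averages of $\partial^\balpha v_h$ are single-valued on sub-simplices of \emph{all} codimensions is false: Lemma~\ref{lm:LW-derivative} with $\ell=0$ gives single-valuedness of $\nabla^{m-k}v_h$ only on codimension-$k$ sub-simplices, so $|\balpha|=m-1$ matches only $k=1$ (faces); already for the Morley element ($m=n=2$) the gradient is multi-valued at vertices. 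This does not break the construction --- averaging multi-valued nodal contributions is always well-defined, and any Lagrange function lies in $H^1$ automatically --- but it means the ``main obstacle'' you identify (consistency of the averaged nodal values) is a non-issue, while the actual work lives entirely in the jump \emph{estimate}. Second, the downward induction on $|\balpha|$ is unnecessary and the step ``$\nabla v_\balpha \approx (v_{\balpha+\be_i})$'' is not justified as written; the argument in \cite{wang2013minimal} treats every $|\balpha|<m$ directly, bounding the jump of $\partial^\balpha v_h$ at each Lagrange node by iterating through the hierarchy of single-valued sub-simplex averages $\{\nabla^{m-k}\text{ on codimension }k\}_{k=1}^{n}$ furnished by the $\ell=0$ layer of Lemma~\ref{lm:LW-derivative}.
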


This property has a twofold application: first, by utilizing the stability of $v_{\balpha}$ and the Poincar\'{e} inequality for $H^1$ functions, we obtain
\begin{equation} \label{eq:Poincare}
\begin{aligned}
\|v_h\|_{m,h} &\lesssim |v_h|_{m,h} \quad \forall v_h \in
V_{h0}^{(m,n)}, \\
\|v_h\|_{m,h}^2 &\lesssim |v_h|_{m,h}^2 + \sum_{|\balpha|<m}
\left( \int_\Omega \partial_h^\balpha v_h\right)^2 \quad \forall v_h
\in V_h^{(m,n)}.
\end{aligned}
\end{equation}
Secondly, using the approximation property of $v_{\balpha}$ with respect to derivatives less than $m$, the lower-order terms in the consistency error estimate can be directly obtained (see Theorem \ref{tm:error-regularity} and Theorem \ref{tm:error-no-regularity} below).

\subsection{Nonconforming finite element method}
We denote $V_h := V_{h0}^{(m,n)}$ as the nonconforming approximation of
$H_0^{m}(\Omega)$. Then, the nonconforming finite element method for problem \eqref{eq:m-harmonic} is to find
$u_h \in V_h$, such that 
\begin{equation} \label{eq:nonconforming-FEM}
a_h(u_h, v_h) = (f, v_h) \quad \forall v_h \in V_h.
\end{equation}
Here, the broken bilinear form $a_h(\cdot, \cdot)$ is defined as 
$$ 
\begin{aligned}
a_h(v, w) &:= (\nabla_h^m v, \nabla_h^m w) \\
&= \sum_{T\in \mathcal{T}_h}
\int_T \sum_{|\balpha|=m} \frac{m!}{\alpha_1! \cdots \alpha_n!}  \partial^\balpha v \partial^\balpha w 
\quad \forall v,w\in H^m(\Omega) + V_h.
\end{aligned}
$$ 

From the Poincar\'{e} inequality \eqref{eq:Poincare}, the bilinear form $a_h(\cdot, \cdot)$ is uniformly $V_h$-elliptic. For the consistent condition, we apply the generalized patch test proposed in \cite{stummel1979generalized} to obtain the following theorem. Other sufficient conditions that are easier to achieve can also be used, such as the patch test \cite{bazeley1965triangular, irons1972experience, veubeke1974variational, wang2001necessity}, the weak patch test \cite{wang2001necessity}, and the F-E-M test \cite{shi1987fem, hu2014new}. 

\begin{theorem}[convergence for $L^2$ data] \label{tm:convergence}
For any $f\in L^2(\Omega)$, the solution $u_h$ of problem
\eqref{eq:nonconforming-FEM} converges to the solution of
\eqref{eq:m-harmonic} for any $m \geq 1$, i.e., 
$$ 
\lim_{h \to 0} \|u-u_h\|_{m,h} = 0.
$$ 
\end{theorem}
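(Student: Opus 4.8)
The plan is to apply Stummel's generalized patch test together with the ingredients already assembled in this section, following the now-standard roadmap of \cite{wang2013minimal, stummel1979generalized}. The overall strategy is to invoke an abstract convergence result: if the discrete bilinear forms $a_h(\cdot,\cdot)$ are uniformly $V_h$-elliptic, if the discrete spaces possess the approximability property, and if the consistency error tends to zero (which the generalized patch test guarantees), then the discrete solutions converge. Two of these three pillars are already in place: uniform ellipticity is exactly \eqref{eq:Poincare}, and approximability is Theorem \ref{tm:interpolation-global}. So the real content is the consistency/patch-test verification.

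First I would recall the structure of the consistency error. For $u \in H_0^m(\Omega)$ solving \eqref{eq:m-harmonic} and any $v_h \in V_h$, one has $a_h(u,v_h) - (f,v_h) = \sum_{T} \int_{\partial T} (\text{normal-derivative traces of } \nabla^m u) \cdot (\text{traces of derivatives of } v_h)$ after integration by parts on each element, collected over interelement faces. The task is to bound this by $o(1)\,\|v_h\|_{m,h}$ as $h \to 0$, uniformly. The key mechanism is the weak continuity of $V_h^{(m,n)}$ together with the $H^1$ weak approximation of Lemma \ref{lm:H1-weak-approximation}: on each $(n-1)$-dimensional face $F$, the jump of $\partial_h^{\balpha} v_h$ across $F$ has vanishing integral mean for the relevant multi-indices (this is precisely the weak-continuity statement derived from Lemma \ref{lm:LW-derivative} earlier in this section), so one may subtract off on each face the $F$-average and estimate the remainder by a trace/scaling argument, gaining a power of $h$. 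For the low-order derivative terms $|\balpha| < m$, one uses the globally $H^1$-conforming (or $H_0^1$-conforming) approximants $v_{\balpha}$ from Lemma \ref{lm:H1-weak-approximation}, which absorb those contributions directly, as remarked after that lemma. The pieces that remain involve only the top-order jumps, and these are handled by the weak continuity plus the Bramble–Hilbert lemma applied to $\nabla^m u$ (for $f \in L^2$ one approximates $u$ by smooth functions to get the genericity, since $u$ need not be in $H^{m+1}$).

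The cleanest way to package all this is the generalized patch test of \cite{stummel1979generalized}: one checks that for every fixed smooth test function (or for a dense subclass), the consistency functional evaluated against any bounded-in-$\|\cdot\|_{m,h}$ sequence $v_h$ tends to zero. Because the finite element spaces satisfy weak continuity and the weak zero-boundary condition (verified above via Lemma \ref{lm:LW-derivative}), Stummel's criterion is met by exactly the same argument as in \cite[Section 3]{wang2013minimal}; no new geometric input is needed since the weak-continuity statement we need is identical in form to the MWX one. Combining uniform ellipticity \eqref{eq:Poincare}, approximability \eqref{eq:interpolation-global}, and the vanishing consistency error then yields $\|u - u_h\|_{m,h} \to 0$ by the standard Strang-type estimate $\|u-u_h\|_{m,h} \lesssim \inf_{v_h \in V_h}\|u - v_h\|_{m,h} + \sup_{w_h}\frac{|a_h(u,w_h)-(f,w_h)|}{\|w_h\|_{m,h}}$.

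The main obstacle — or rather the only place demanding care — is the consistency estimate for $f \in L^2(\Omega)$ only, i.e.\ without elliptic regularity $u \in H^{m+1}$. Here one cannot directly differentiate $\nabla^m u$ one more time on each element. The remedy, as in \cite{wang2013minimal, wu2019nonconforming}, is a density argument: approximate $u$ in $H^m$ by functions $u_\varepsilon \in C^\infty \cap H_0^m$, prove the consistency bound with an explicit power of $h$ for the smooth $u_\varepsilon$, and then let $\varepsilon \to 0$ after $h \to 0$, using the uniform (in $h$) boundedness of the consistency functional in terms of $\|u - u_\varepsilon\|_m$ that follows from weak continuity. This interchange of limits, combined with the weak-continuity-based estimate, delivers $\lim_{h\to 0}\|u-u_h\|_{m,h}=0$ for merely $L^2$ data, which is the assertion of the theorem. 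All remaining details are identical to the cited references and are therefore omitted.
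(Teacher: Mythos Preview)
Your proposal is correct and matches the paper's approach exactly: the paper does not give a detailed proof of this theorem but simply states that it follows from Stummel's generalized patch test \cite{stummel1979generalized}, relying on the uniform ellipticity \eqref{eq:Poincare}, the approximability in Theorem \ref{tm:interpolation-global}, and the weak continuity/weak zero-boundary properties derived from Lemma \ref{lm:LW-derivative}. Your sketch in fact supplies more of the mechanics (the density argument for $f\in L^2$, the role of Lemma \ref{lm:H1-weak-approximation}) than the paper itself records.
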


\subsection{Error estimate}
Based on Strang's Lemma, the error $|u - u_h|_{m,h}$ can be controlled by the approximation error and the consistency error.
Following \cite{wu2019nonconforming}, we present two types of the estimate for the consistent error.

{\it Error estimate under the extra regularity assumption}.  Let $r = \max\{m+1, 2m-1\}$. If $u \in H^r(\Omega)$ and $f \in L^2(\Omega)$, then the consistency error estimate is given by
$$
\sup_{v_h \in V_h} \frac{|a_h(u, v_h) - \langle f, v_h \rangle |}{|v_h|_{m,h}} \lesssim \sum_{k=1}^{r-m} h^k |u|_{m+k} + h^m \|f\|_0.
$$
The proof can be found in \cite[Lemma 3.2]{wang2013minimal} or \cite[Lemma 3.2]{wu2019nonconforming}. Combined with Theorem \ref{tm:interpolation-global} (approximability), we obtain the following estimate.
\begin{theorem}[error estimate I] \label{tm:error-regularity}
Let $r = \max\{m+1, 2m-1\}$. If $u \in H^r(\Omega)$ and $f \in
L^2(\Omega)$, then 
\begin{equation} \label{equ:error-regularity}
|u - u_h|_{m,h} \lesssim \sum_{k=1}^{r-m} h^k |u|_{m+k} + h^m \|f\|_0. 
\end{equation}
\end{theorem}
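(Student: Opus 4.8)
The plan is to apply Strang's second lemma, which bounds $|u - u_h|_{m,h}$ by the sum of an approximation term and a consistency term. For the approximation term, I would take $v_h = \Pi_h u$ and invoke Theorem \ref{tm:interpolation-global} (approximability) with $s=1$, which gives
$\inf_{v_h \in V_h} \|u - v_h\|_{m,h} \le \|u - \Pi_h u\|_{m,h} \lesssim h |u|_{m+1,\Omega} \le \sum_{k=1}^{r-m} h^k |u|_{m+k,\Omega}$,
using that $r - m \ge 1$ always and that $u \in H^r(\Omega) \subset H^{m+1}(\Omega)$. For the consistency term, I would simply quote the estimate stated immediately before the theorem,
$\sup_{v_h \in V_h} \frac{|a_h(u,v_h) - \langle f, v_h\rangle|}{|v_h|_{m,h}} \lesssim \sum_{k=1}^{r-m} h^k |u|_{m+k,\Omega} + h^m \|f\|_{0,\Omega}$,
whose proof is referenced to \cite[Lemma 3.2]{wang2013minimal} and \cite[Lemma 3.2]{wu2019nonconforming}. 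Combining the two bounds through Strang's lemma immediately yields \eqref{equ:error-regularity}.

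The one point requiring a little care is the passage from the abstract Strang estimate, which controls the seminorm $|u - u_h|_{m,h}$ in terms of an infimum over $V_h$ measured in the full norm $\|\cdot\|_{m,h}$ (or the seminorm, depending on the form of the lemma used), to the stated conclusion. Here the uniform $V_h$-ellipticity of $a_h$, i.e. the first inequality in \eqref{eq:Poincare}, $\|v_h\|_{m,h} \lesssim |v_h|_{m,h}$ for $v_h \in V_{h0}^{(m,n)}$, together with boundedness of $a_h$, is exactly what makes Strang's lemma applicable and lets one replace the full norm of the error by the seminorm and vice versa. So the steps, in order, would be: (i) record that $a_h$ is bounded and uniformly $V_h$-elliptic by \eqref{eq:Poincare}; (ii) write down Strang's second lemma in the form $|u - u_h|_{m,h} \lesssim \inf_{v_h \in V_h}\|u - v_h\|_{m,h} + \sup_{w_h \in V_h} |a_h(u,w_h) - \langle f, w_h\rangle|/|w_h|_{m,h}$; (iii) bound the first term via Theorem \ref{tm:interpolation-global}; (iv) bound the second term via the cited consistency estimate; (v) add.

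There is essentially no genuine obstacle here — the theorem is a routine assembly of results already established or cited in the excerpt. If anything, the only mildly delicate bookkeeping is checking that the approximation contribution $h |u|_{m+1,\Omega}$ is absorbed into $\sum_{k=1}^{r-m} h^k |u|_{m+k,\Omega}$, which holds because the $k=1$ term is present whenever $r \ge m+1$, and $r = \max\{m+1, 2m-1\} \ge m+1$ for all $m \ge 1$; for $m \le 2$ one has $r = m+1$ and the sum has a single term, while for $m \ge 3$ the sum runs up to $k = m-1$ and higher-order seminorms of $u$ are controlled by the regularity hypothesis $u \in H^r(\Omega)$. Since the paper explicitly says the analytical approach is well-established and declines to repeat the proofs, I would keep the write-up to a short paragraph citing Strang's lemma, \eqref{eq:Poincare}, Theorem \ref{tm:interpolation-global}, and the displayed consistency bound.
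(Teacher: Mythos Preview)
Your proposal is correct and follows exactly the approach the paper indicates: Strang's lemma (with $V_h$-ellipticity from \eqref{eq:Poincare}) splits the error into an approximation part handled by Theorem \ref{tm:interpolation-global} and a consistency part handled by the displayed estimate cited from \cite{wang2013minimal, wu2019nonconforming}. The paper itself does not write out more than this, so your short assembly is precisely what is intended.
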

Note that when $n=1$, all cases reduce to conforming elements, so $|u - u_h|_{m,h}$ is determined by the approximation error, then Theorem \ref{tm:error-regularity} can be improved as:
$$
|u - u_h|_{m,h} \lesssim h^{r - m} |u|_r \quad \forall u \in H^r(\Omega), r = \max\{m+1, 2m-1\}.
$$ 
However, when $n \geq 2$, the finite element spaces are not even $C^0$ continuous, indicating that the result in Theorem \ref{tm:error-regularity} cannot be further improved.

{\it Error estimate by conforming relatives}. The error estimate can be improved with minimal regularity under the following assumption.
\begin{assumption}[conforming relatives] \label{asm:conforming-relatives}
There exists an $H^m$-conforming finite element space $V_h^c \subset
H_0^m(\Omega)$, and an operator $\Pi_h^c: V_h \mapsto V_h^c$ such that 
\begin{equation} \label{eq:conforming-relatives}
\sum_{j=0}^{m-1} h^{2(j-m)} |v_h - \Pi_h^c v_h|_{j,h}^2 + |\Pi_h^c
v_h|_{m,h}^2 \lesssim |v_h|_{m,h}^2. 
\end{equation} 
\end{assumption}

The essence of this assumption lies in the interpolation on $H^m$-conforming elements using degrees of freedom averaging techniques. In fact, it has been verified in various cases: see \cite{scott1990finite, brenner2003poincare} for the case when $m = 1$, \cite{li2014new} for the Morley element in 2D, and \cite{hu2014new, hu2016canonical} for arbitrary $m \geq 1$ in 2D. For higher-dimensional cases, the validation of this assumption is implicit in the construction of arbitrary $H^m$ conforming elements in any dimension \cite{hu2023construction}.

Under Assumption \ref{asm:conforming-relatives}, if $f \in L^2(\Omega)$, we follow the proof of \cite[Lemma 3.7]{wu2019nonconforming} to obtain 
$$
\begin{aligned}
\sup_{v_h \in V_h} & \frac{|a_h(u, v_h) - \langle f, v_h \rangle
|}{|v_h|_{m,h}} \lesssim \inf_{w_h\in V_h} |u-w_h|_{m,h} +
 h^m \|f\|_0 \\
&+ \sum_{|\balpha|=m} \left(\|\partial^\balpha u - P_h^0
    \partial^\balpha u\|_0 + \sum_{F\in \mathcal{F}_h}
    \|\partial^\balpha u - P_{\omega_F}^0 \partial^\balpha
    u\|_{0,\omega_F} \right).
\end{aligned}
$$
Here, $P_h^0$ denotes the $L^2$ projection onto the piecewise constant space, while $P_{\omega_F}^0$ represents the local $L^2$ projection onto the constant space, where $\omega_F$ is the union of all elements that share the face $F$. Combining with approximation properties, we obtain the following estimate.

\begin{theorem}[error estimate II] \label{tm:error-no-regularity}
Under Assumption \ref{asm:conforming-relatives}, if $f \in
L^2(\Omega)$ and $u \in H^{m+t}(\Omega)$, then 
\begin{equation}\label{equ:error-no-regularity}
|u - u_h|_{m,h} \lesssim h^{s}|u|_{m+s} + h^m \|f\|_0,
\end{equation}
where $s = \min\{1,t\}$.
\end{theorem}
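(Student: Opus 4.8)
The plan is to combine Strang's Lemma with the consistency error estimate already displayed just above the theorem statement, and then absorb each term using the approximation properties of the scheme. First I would invoke the abstract Strang estimate for nonconforming methods: since $a_h(\cdot,\cdot)$ is uniformly $V_h$-elliptic by the Poincar\'e inequality \eqref{eq:Poincare}, we have
\[
|u-u_h|_{m,h} \lesssim \inf_{w_h\in V_h} |u-w_h|_{m,h} + \sup_{v_h\in V_h} \frac{|a_h(u,v_h)-\langle f,v_h\rangle|}{|v_h|_{m,h}}.
\]
For the first (approximation) term I would take $w_h=\Pi_h u$ and apply Theorem \ref{tm:interpolation-global}, giving $\inf_{w_h\in V_h}|u-w_h|_{m,h}\lesssim h^{s}|u|_{m+s,\Omega}$ with $s=\min\{1,t\}$; when $t\geq 1$ this is $h\,|u|_{m+1}$, and when $0\le t<1$ it is $h^{t}|u|_{m+t}$, matching \eqref{equ:error-no-regularity}.

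For the consistency term I would quote the displayed bound derived (as in \cite[Lemma 3.7]{wu2019nonconforming}) under Assumption \ref{asm:conforming-relatives}, namely that it is controlled by $\inf_{w_h\in V_h}|u-w_h|_{m,h}+h^m\|f\|_0$ plus the sum over $|\balpha|=m$ of $\|\partial^\balpha u-P_h^0\partial^\balpha u\|_0$ and the face patch terms $\sum_{F}\|\partial^\balpha u-P_{\omega_F}^0\partial^\balpha u\|_{0,\omega_F}$. The $\inf$ term is handled exactly as above. The $L^2$-projection terms are standard: by Bramble--Hilbert on each element (respectively on each patch $\omega_F$, using shape-regularity and quasi-uniformity so that the patches have uniformly bounded overlap and diameter $\sim h$), $\|\partial^\balpha u-P_h^0\partial^\balpha u\|_0\lesssim h^{s}|\partial^\balpha u|_{s,\Omega}\le h^{s}|u|_{m+s}$ for $s\in(0,1]$, and similarly for the patch projections after summing the squared local contributions. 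Collecting the three contributions yields $|u-u_h|_{m,h}\lesssim h^{s}|u|_{m+s}+h^m\|f\|_0$.

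The only genuinely substantive input is the consistency estimate under Assumption \ref{asm:conforming-relatives}: its proof requires splitting $a_h(u,v_h)-\langle f,v_h\rangle$ using the conforming relative $\Pi_h^c v_h\in H_0^m(\Omega)$ (which makes the corresponding piece vanish by the weak form of \eqref{eq:m-harmonic}), then estimating $a_h(u,v_h-\Pi_h^c v_h)$ by inserting element-wise constants for $\partial^\balpha u$, integrating by parts, and using the weak-continuity of $V_h$ (Lemma \ref{lm:H1-weak-approximation} and the discussion following it) together with \eqref{eq:conforming-relatives} to control $|v_h-\Pi_h^c v_h|_{j,h}$ by $h^{m-j}|v_h|_{m,h}$. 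Since the statement explicitly allows us to follow \cite[Lemma 3.7]{wu2019nonconforming} verbatim, I would not reproduce that argument in detail; the remaining work — Strang's lemma, the interpolation bound, and the Bramble--Hilbert estimates for the $L^2$ projections — is entirely routine. Hence the main obstacle is purely bookkeeping: tracking that every term degrades no worse than $h^{s}$ with the correct $s=\min\{1,t\}$, in particular that the case $t<1$ (minimal regularity) is covered by the fractional-order Bramble--Hilbert / interpolation estimates rather than only the integer-order ones.
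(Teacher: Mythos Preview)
Your proposal is correct and follows essentially the same route as the paper: the paper simply states the consistency bound (quoting \cite[Lemma 3.7]{wu2019nonconforming}) and then says ``combining with approximation properties, we obtain the following estimate,'' which is exactly your Strang + interpolation + Bramble--Hilbert argument spelled out. Your additional remark that the fractional-order Bramble--Hilbert/interpolation estimates are what cover the $t<1$ case is a useful clarification the paper leaves implicit.
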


\section{Numerical Experiments} \label{sec:numerical}
In this section, we present several 2D numerical results to support the
theoretical results obtained in Section \ref{sec:2m}.

\subsection{Example 1: smooth solution}
In the first numerical example, we choose $f$ such that the exact solution is $u
= 2^{4m-6} (x-x^2)^m (y-y^2)^m$ in $\Omega = (0,1)^2$, which provides the
homogeneous boundary conditions in \eqref{eq:m-harmonic}. After computing 
\eqref{eq:nonconforming-FEM} for various values of $h$, we calculate
the errors and orders of convergence in $H^{k} (k=0,1,2,\ldots,m)$ and report
the results of $m=3,4$ in Table \ref{tab:example1_m3} and Table \ref{tab:example1_m4}, respectively. The tables show that the computed
solution converges linearly to the exact solution in the broken $H^m$ norm, which
is in agreement with Theorem \ref{tm:error-regularity} and
Theorem \ref{tm:error-no-regularity}.  Further, Table
\ref{tab:example1_m3} and Table \ref{tab:example1_m4} indicate that the convergence order of the error in lower-order norms is $h^2$.

\begin{table}[!htbp]
\caption{Example 1 $(m=3)$: Errors and convergence orders.}
\centering
{\small{
\begin{tabular}{@{}c|cc|cc|cc|cc@{}}
   \hline
  $1/h$	&$\|u-u_h\|_0$	& Order	& 
  $|u-u_h|_{1,h}$ & Order & 
  $|u-u_h|_{2,h}$	& Order & $|u-u_h|_{3,h}$ & Order\\ \hline
  4		&2.1506e-3 &--	  &1.5144e-2 &--	  &1.3957e-1 &--   &2.4820e+0	&--  \\
  8		&1.9903e-3 &0.11	&1.0276e-2 &0.56	&6.2813e-2 &1.15 &1.4448e+0	&0.78\\
  16	&6.3643e-4 &1.64	&3.1633e-3 &1.70	&1.9066e-2 &1.72 &7.6583e -1	&0.91\\
  32	&1.6858e-4 &1.92	&8.3252e-4 &1.93	&5.0312e-3 &1.92 &3.8912e -1	&0.98\\
  64	&4.2755e-5 &1.98	&2.1091e-4 &1.98	&1.2762e-3 &1.98 &1.9536e -1	&0.99\\
  \hline
\end{tabular}
}}
\label{tab:example1_m3}
\end{table}

\begin{table}[!htbp]
    \caption{Example 1 $(m=4)$: Errors and convergence orders.}
    \centering
    {\small{

 \begin{tabular}{@{}c|cc|cc|cc@{}}
    \hline
   $1/h$	&$\|u-u_h\|_0$	& Order	& 
   $|u-u_h|_{1,h}$ & Order &  $|u-u_h|_{2,h}$	& Order \\ \hline
   4		&2.6832e-3 &--	  &1.6055e-2 &--	  &1.6847e-1 &-- \\
   8	&1.7536e-3 &0.61	&1.1231e-2 &0.52	&9.8257e-2 &0.78 \\
   16	&8.5302e-4 &1.04	&4.8519e-3 &1.21	&3.8117e-2 &1.37 \\
   32	&2.4791e-4 &1.78	&1.3830e-3 &1.81	&1.0665e-2 &1.84\\
   64	&5.4171e-5 &2.19	&2.9635e-4 &2.22	&2.2511e-3 &2.24 \\
   \hline
 \end{tabular}

 \begin{tabular}{c|cc|cc@{}}
    \hline
   $1/h$	&
   $\quad|u-u_h|_{3,h}\quad$ & $\quad$ Order $\quad$ &  $\quad|u-u_h|_{4,h}\quad$	& $\quad$ Order $\ \ $ \\ \hline
   4		   &2.2146e+0	&--  &3.9478e+1	&-- \\
   8	 &8.9968e -1	&1.30 &2.4686e+1	&0.68\\
   16	 &3.3377e -1	&1.43 &1.3437e+1	&0.88\\
   32	 &9.4056e -2	&1.83 &6.9258e+0	&0.96\\
   64	&2.1248e -2 &2.15	&3.4834e+0 &0.99\\
   \hline
 \end{tabular}
 }}
 \label{tab:example1_m4}
 \end{table}

\begin{figure}[!htbp]
\caption{Meshes for Example 1 (left) and Example 2 (right).}
\label{fig:uniform-grids}
\centering 
\captionsetup{justification=centering}
   \includegraphics[width=0.35\textwidth]{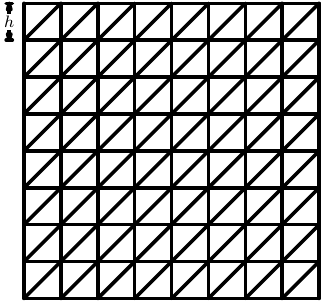} 
\qquad\qquad  
   \includegraphics[width=0.35\textwidth]{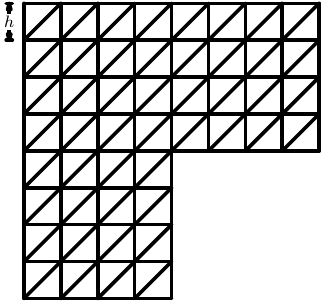} 
\end{figure}

\subsection{Example 2}
In the second example, we test the method in which the solution has
partial regularity on a non-convex domain. To this end, we solve the
$m$-harmonic equation $(-\Delta)^m u = 0$ 
on the 2D L-shaped domain $\Omega = (-1,1)^2\setminus [0,1)\times
(-1,0]$ shown in Figure \ref{fig:uniform-grids}, with Dirichlet boundary conditions
given by the exact solution 
$$ 
u = r^{m-1/2} \sin((m-1/2)\theta),
$$ 
where $(r,\theta)$ are polar coordinates. Due to the singularity at
the origin, the solution $u \in H^{m+1/2-\varepsilon}(\Omega)$, for any $\varepsilon>0$.
The method does converge with the optimal order $h^{1/2-\varepsilon}$ in the broken $H^m$
norm, as shown in Table \ref{tab:example2_m3} and  \ref{tab:example2_m4},
 where the results of $m=3,4$ is presented.

\begin{table}[!htbp]
    \caption{Example 2 $(m=3)$: Errors and convergence orders.}
    \centering
    {\small{
    \begin{tabular}{@{}c|cc|cc|cc|cc@{}}
       \hline
      $1/h$	&$\|u-u_h\|_0$	& Order	& 
      $|u-u_h|_{1,h}$ & Order & 
      $|u-u_h|_{2,h}$	& Order & $|u-u_h|_{3,h}$ & Order\\ \hline
      4		&9.7591e-4 &--	  &1.0280e-2 &--	  &1.0522e-1 &--   &2.1435e+0	&--  \\
      8		&4.4795e-4 &1.12	&2.8791e-3 &1.84	&4.1464e-2 &1.34 &1.4583e+0	&0.56\\
      16	&2.0399e-4 &1.13	&1.1253e-3 &1.36	&1.6098e-2 &1.36 &1.0330e+0	&0.50\\
      32	&8.9502e-5 &1.19	&4.9272e-4 &1.19	&6.3490e-2 &1.34 &7.3245e-1	&0.50\\
      64	&4.0176e-5 &1.16	&2.2207e-4 &1.15	&2.5788e-3 &1.30 &5.1862e-1	&0.50\\
      \hline
    \end{tabular}
    }}
    \label{tab:example2_m3}
    \end{table}

    \begin{table}[!htbp]
        \caption{Example 2 $(m=4)$: Errors and convergence orders.}
        \centering
        {\small{

     \begin{tabular}{@{}c|cc|cc|cc@{}}
        \hline
       $1/h$	&$\|u-u_h\|_0$	& Order	& 
       $|u-u_h|_{1,h}$ & Order &  $|u-u_h|_{2,h}$	& Order \\ \hline
       4		&5.8990e-4 &--	  &4.4638e-3 &--	  &5.2416e-2 &--\\
       8	&1.5666e-4 &1.91	&9.0894e-4 &2.30	&1.0984e-2 &2.25\\
       16	&6.4249e-5 &1.29	&3.7094e-4 &1.29	&3.2422e-3 &1.76\\
       32	&2.4888e-5 &1.37	&1.4626e-5 &1.34	&1.1898e-3 &1.45\\
       \hline
     \end{tabular}
    
     \centering
     \begin{tabular}{c|cc|cc@{}}
        \hline
       $1/h$	&
       $\quad|u-u_h|_{3,h}\quad$ & $\quad$ Order $\quad$ &  $\quad|u-u_h|_{4,h}\quad$	& $\quad$ Order $\ \ $ \\ \hline
       4		&5.5950e-1	&--  &1.1965e+1	&-- \\
       8	&2.0456e-1	&1.45 &7.4219e+0	&0.69\\
       16	&7.6405e-2	&1.42 &5.1475e+0	&0.53\\
       32	&2.8363e-2	&1.43 &3.6424e+0	&0.50\\
       \hline
     \end{tabular}
     }}
     \label{tab:example2_m4}
     \end{table}

\section{Concluding remarks} \label{sec:remarks}
In this paper, a family of lowest-order nonconforming finite element spaces is designed for solving $2m$-th order elliptic problems on $n$-dimensional simplicial grids for any $m, n \geq 1$. This paper provides a solution to the open problem of overcoming the restriction $m \leq n$ in the well-known Morley-Wang-Xu element \cite{wang2013minimal}. To the best of the authors' knowledge, this is also the first result in the canonical piecewise polynomial setting of nonconforming element for any $m,n \geq 1$.

Considering the weak continuity in the construction of nonconforming elements, it is our believe that the design of the degrees of freedom in \eqref{eq:DOF} represents the ``optimal'' choice, as they are maximally utilized through integration by parts. On the other hand, the local number of the degrees of freedom in \eqref{eq:number-dof} is entirely comparable to the minimal requirements of polynomial approximation $\mathcal{P}_m(T)$. In fact, a rough estimate of \eqref{eq:number-dof} shows that for any $m > n \geq 1$, $\dim P_T^{(m,n)} \leq \dim \mathcal{P}_m(T) + \dim \mathcal{P}_{m-n}(T) \leq 2\dim \mathcal{P}_m(T)$, indicating that the number of local degrees of freedom in the proposed elements is optimal, up to a uniform constant factor. 



\bibliographystyle{amsplain}
\bibliography{LW}   

\end{document}